\let\mathcal\mathscr
\newtheorem{theorem}{\bf Theorem}[section]
\newtheorem{lemma}[theorem]{\bf Lemma}
\newtheorem{conjecture}{\bf Conjecture}
\begin{document}

\title{On a Conjecture of Erd\H os on Size Ramsey Number of Star Forests}
\author{A. Davoodi$^1$\footnote{The research of the first author was supported by the Czech Science Foundation, grant number GA19-08740S.}
	, R. Javadi$^{2,3}$\footnote{The research of the second author was in part supported by a grant from IPM (No. 1400050420).}, A. Kamranian$^{4}$, G. Raeisi$^{4}$\vspace{0.4cm}\\
{\footnotesize  $^{1}$The Czech Academy of Sciences, Institute of Computer Science, Pod Vod\'{a}renskou v\v{e}\v{z}\'{\i} 2, 182 07 Prague, Czech Republic.}\\
 {\footnotesize $^{2}$Department of Mathematical Sciences, Isfahan University of Technology, 	84156-83111, Isfahan, Iran}\\
 {\footnotesize $^{3}$School of Mathematics, Institute for Research in Fundamental Sciences (IPM), P.O. Box: 19395-5746, Tehran, Iran} \\
{\footnotesize $^{4}$Department of Mathematical Sciences, Shahrekord University, P.O.Box 115, Shahrekord, Iran}\vspace{0.4cm}\\
{\footnotesize {\bf E-mails:} davoodi@cs.cas.cz, rjavadi@iut.ac.ir, azamkamranian@stu.sku.ac.ir, g.raeisi@sci.sku.ac.ir}}

\date{}
\maketitle

\begin{abstract}
{\footnotesize
Given graphs $ F_1, F_2$ and $G$, we say that $G$ is Ramsey for $(F_1,F_2)$ and we write $G\rightarrow(F_1, F_2)$, if for every edge coloring of $G$ by red and blue, there is either a red copy of $F_1$ or a blue copy of $F_2$ in $G$. The size Ramsey number $\hat{r}(F_1, F_2)$ is defined as the minimum number of edges of a graph $G$ such that $G\rightarrow(F_1, F_2)$. This paper provides the exact value of $\hat{r}(F_1, F_2)$ for many pairs $(F_1, F_2)$ of star forests, giving a partial solution to a conjecture of Burr et al. (Ramsey-minimal graphs for multiple copies, Indagationes Mathematicae, 81(2) (1978), 187-195). }
\end{abstract}

\bigskip
\section{Introduction}
In this paper, we are only concerned with undirected simple finite graphs without isolated vertices and we follow \cite{Boundy} for terminology and notations not defined here.  For a given graph $G$, we denote its vertex set,  edge set,  maximum degree, minimum degree and edge chromatic number (chromatic index) of $G$ by $V(G)$, $E(G)$,  $\Delta(G)$, $\delta(G)$ and $\chi'(G)$, respectively.
The {\it order} of a graph $G$ is the cardinality of its vertex set, and the {\it size} of $G$, $e(G)$, is the cardinality of its edge set. For a vertex $v\in V(G)$, we use $\deg{(v)}$ and $N(v)$ to denote the degree and the set of neighborhoods of $v$ in $G$, respectively.  Also, $ G-v $ stands for the graph obtained from $ G $ by deleting $ v $ and all leave neighbors of $ v $. The {\it star} graph on $n+1$ vertices is denoted by $K_{1,n}$, consisting of one root vertex connected to $n$ leaves. Also, by a {\it matching} of size $m$, $mK_2$, we mean $m$ independent edges. Also, for given graphs $G$ and $H$, we use $G\sqcup H$ to denote the disjoint union of $G$ and $H$ and we use $mG$ to denote the disjoint union of $m$ copies of $G$.

\medskip
A {\it factor} in a graph $G$ is a spanning subgraph of $G$ and a {\it $k$-factor} is a $k$-regular factor in $G$. In particular,  1-factors are usually called perfect matchings. A graph $G$ is called {\it $k$-factorable} if the edges of $G$ can be decomposed into $k$-factors. It is well known that graphs of even degree can be decomposed into edge-disjoint cycles \cite{Boundy}. For regular graphs of even degree, the cycles in some decomposition can be grouped to form 2-factors, as presented in the following theorem.

\begin{theorem}[Petersen \cite{5}]\label{peterthm}
Every regular graph of even degree is 2-factorable.
\end{theorem}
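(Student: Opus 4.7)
The plan is to reduce the problem to finding a perfect matching (or perfect matching decomposition) in a suitably constructed regular bipartite graph, using the fact that every vertex of even degree permits an Eulerian traversal.

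First, I would assume without loss of generality that $G$ is connected, since any 2-factorization of the components yields one for $G$. Write the common degree as $2k$. Since every vertex has even degree, $G$ admits an Eulerian circuit $C$; fix an orientation of the edges by the direction in which $C$ traverses them. Under this orientation, each vertex $v$ has in-degree and out-degree both equal to $k$.

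Next, I would construct an auxiliary bipartite graph $H$ with parts $V^+=\{v^+:v\in V(G)\}$ and $V^-=\{v^-:v\in V(G)\}$, placing an edge $u^+v^-$ in $H$ for every edge of $G$ that is oriented from $u$ to $v$ in $C$. By the in/out-degree count above, $H$ is $k$-regular and bipartite. The key fact to invoke here is that every $k$-regular bipartite graph is $k$-edge-colorable (equivalently, decomposes into $k$ perfect matchings); this is the bipartite case of König's edge-coloring theorem, which follows inductively from the existence of a perfect matching in any regular bipartite graph via Hall's theorem.

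Finally, let $M_1,\dots,M_k$ be such a decomposition of $E(H)$ into perfect matchings, and for each $i$ let $F_i\subseteq E(G)$ be the set of edges of $G$ corresponding to the bipartite edges in $M_i$. For any $v\in V(G)$, the perfect matching $M_i$ covers $v^+$ by exactly one edge and $v^-$ by exactly one edge, so $v$ is incident to exactly two edges of $F_i$; hence each $F_i$ is a 2-factor of $G$, and $F_1,\dots,F_k$ partition $E(G)$, giving the desired 2-factorization.

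The only nontrivial ingredient is the perfect-matching decomposition of a regular bipartite graph; everything else is a routine bookkeeping translation between the Eulerian orientation on $G$ and the bipartite double cover $H$, and I do not expect a real obstacle beyond verifying those degree counts carefully.
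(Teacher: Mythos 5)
Your proof is correct, and since the paper states this result only as a cited classical theorem of Petersen without giving a proof, there is nothing in the paper to diverge from: your argument (reduce to connected components, take an Eulerian orientation so every vertex has in- and out-degree $k$, pass to the $k$-regular bipartite graph on $V^+\cup V^-$, split it into $k$ perfect matchings via Hall/K\"onig, and pull each matching back to a $2$-factor) is precisely the standard proof of Petersen's $2$-factor theorem. The only details worth stating explicitly are the trivial case of degree $0$ and the remark that, because $G$ is simple and loopless, the matching edges covering $v^+$ and $v^-$ come from two distinct edges of $G$, so each pulled-back spanning subgraph really is $2$-regular; both follow immediately from the degree bookkeeping you already did.
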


\medskip
For given graphs $F_1, F_2$ and $G$, we write $G\rightarrow(F_1, F_2)$ and we say $G$ is {\it Ramsey graph} for the pair $(F_1,F_2)$ if in any red-blue coloring of the edges of $G$, there is either a red copy of $F_1$ or a blue copy of $F_2$.  A $2$-edge coloring of $G$  is called  $(F_1, F_2)$-{\it free coloring} if $G$ avoid monochromatic $F_i$ in the $i$-th color class, for each $i\in\{1,2\}$. For given graphs $F_1, F_2$, the {\it size Ramsey number} $\hat{r}(F_1, F_2)$ is defined as the  minimum number of the edges of a graph $G$ such that $G\rightarrow(F_1, F_2)$.  The investigation of size Ramsey numbers was first initiated by Erd{\H{o}}s et al. \cite{14} in 1978. For results concerning size Ramsey numbers, we refer the reader to \cite{14, surveysize, jo, pkh} and references therein. In this paper, we study the size Ramsey number of star forests (a star forest, sometimes also referred as a galaxy, is a forest whose each component  is a star) and the exact value of $\hat{r}(F_1, F_2)$ is determined for some pairs $(F_1, F_2)$  of star forests.


\medskip
In 1978 \cite{7}, Burr  et al. proposed the following conjecture, determining the exact value of the 2-color size Ramsey number of star forests.

\begin{conjecture}[Burr, Erd{\H o}s, Faudree, Rousseau and Schelp \cite{7}]\label{conj}
For given positive integers $n_1\geq n_2\geq \cdots\geq n_s$ and $m_1\geq m_2\geq \cdots\geq m_t$, let $F_1=\sqcup_{i=1}^{s}K_{1, n_i}$ and $F_2=\sqcup_{j=1}^{t}K_{1, m_j}$. Then, $\hat{r}(F_1, F_2)= \sum_{k=2}^{s+t}l_k,$
where, $l_k=\max\{n_i+m_j-1 : i+j=k\}$.
\end{conjecture}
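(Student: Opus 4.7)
\noindent\emph{Proof proposal.} The plan is to establish matching upper and lower bounds. For the upper bound, I propose the explicit extremal graph $G^\ast = \bigsqcup_{k=2}^{s+t} K_{1,l_k}$, which has exactly $\sum_{k=2}^{s+t} l_k$ edges. For the lower bound, one must show that no graph with strictly fewer edges arrows $(F_1,F_2)$, and this is where I expect the main difficulty to lie.

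To verify $G^\ast \to (F_1,F_2)$, fix any red/blue coloring and write $R_k, B_k$ for the numbers of red and blue edges in the $k$-th component, so $R_k+B_k=l_k$. Assume toward contradiction that neither monochromatic forest appears. By Hall's theorem applied to the bipartite assignment problem sending the $i$-th star of $F_1$ to a component $k$ with $R_k \geq n_i$, the failure of red $F_1$ produces an index $i^\ast \in \{1,\dots,s\}$ with $|\{k : R_k < n_{i^\ast}\}| \geq s+t-i^\ast$, and symmetrically an index $j^\ast \in \{1,\dots,t\}$ with $|\{k : B_k < m_{j^\ast}\}| \geq s+t-j^\ast$. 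Intersecting these two sets inside $\{2,\dots,s+t\}$ yields at least $s+t-i^\ast-j^\ast+1$ indices $k$ for which $l_k \leq n_{i^\ast}+m_{j^\ast}-2$. However, for every $k \leq i^\ast+j^\ast$, the pair $(i',j')=\bigl(\min(i^\ast,k-1),\,k-\min(i^\ast,k-1)\bigr)$ is admissible in the definition of $l_k$ and satisfies $n_{i'}+m_{j'} \geq n_{i^\ast}+m_{j^\ast}$, giving $l_k \geq n_{i^\ast}+m_{j^\ast}-1$. Hence all bad indices lie in $\{i^\ast+j^\ast+1,\dots,s+t\}$, a set of size only $s+t-i^\ast-j^\ast$, which is the desired contradiction.

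For the lower bound I would argue by induction on $s+t$. The base case $s=t=1$ reduces to $\hat r(K_{1,n_1},K_{1,m_1})=n_1+m_1-1$, which follows from the observation that a graph arrowing $(K_{1,n_1},K_{1,m_1})$ must contain a vertex of degree at least $n_1+m_1-1$. For the inductive step, start from a size-minimal Ramsey graph $G$ and select a vertex $v$ of maximum degree; this degree is at least $l_2=n_1+m_1-1$ by the base case. The plan is to split the edges at $v$ into $n_1$ red and $m_1-1$ blue (or the symmetric choice), consume one component of $F_1$ (or of $F_2$) at $v$, and argue that $G-v$ must Ramsey-arrow a reduced pair $(F_1',F_2')$ whose conjectured size Ramsey number is precisely $\sum_{k=2}^{s+t} l_k - l_{k^\ast}$ for the appropriate $k^\ast$, yielding the desired edge lower bound on $G$.

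The principal obstacle is orchestrating this inductive reduction so that the piece removed at $v$ accounts for exactly one of the $l_k$'s and the residual pair $(F_1',F_2')$ has the structure to match the conjectured formula. The identities satisfied by the $l_k$'s are sensitive to how the maximum $n_i+m_j-1$ is distributed among the indices, so a single reduction scheme cannot work in all regimes and a careful case analysis on the pairs achieving each $l_k$ seems unavoidable; this is presumably why the full conjecture remains open and this paper only resolves structurally favourable families. A secondary technical issue is that a size-minimal Ramsey graph need not be a disjoint union of stars, so the extraction of $v$ may leave a complicated residual graph whose Ramsey properties must still be controlled, possibly via an auxiliary structural result on minimal Ramsey graphs for star forests.
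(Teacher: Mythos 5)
You are attempting to prove something the paper itself does not prove: this statement is recorded there as an open conjecture of Burr, Erd\H{o}s, Faudree, Rousseau and Schelp, and the paper only establishes the trivial upper bound together with several special families (equal star sizes, $s=1$, $s=2$ with $n_1=n_2$, all sizes odd, and a mixed odd case). Your upper-bound half is correct: the Hall-type counting on the components of $G^\ast=\sqcup_{k=2}^{s+t}K_{1,l_k}$ (an index $i^\ast$ with at least $s+t-i^\ast$ components of red size $<n_{i^\ast}$, the symmetric $j^\ast$, and the observation that $l_k\geq n_{i^\ast}+m_{j^\ast}-1$ for all $k\leq i^\ast+j^\ast$) is a valid, slightly more elaborate substitute for the paper's one-line greedy argument from $K_{1,n+m-1}\rightarrow(K_{1,n},K_{1,m})$. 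But this is the easy half of the conjecture.

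The lower bound, which is the entire content of the statement, is not proved, and the induction you sketch breaks at two concrete points. First, you cannot in general force $\deg(v)\geq l_2=n_1+m_1-1$: Lemma~\ref{decompose} only excludes $\Delta(G)\leq n_1+m_1-3$ (and $\leq n_1+m_1-2$ when $n_1,m_1$ are both odd), and the remark after it ($m=2$ with an $n$-regular graph having no perfect matching) shows that Ramsey graphs with $\Delta(G)=n_1+m_1-2$ genuinely occur; handling exactly this case is what consumes most of the proof of Theorem~\ref{starsss} and is why Theorems~\ref{odd-starforest} and~\ref{odd-stars-forest} carry parity hypotheses. Second, even granting a vertex $v$ of degree $l_2$, deleting it reduces to the pair $(\sqcup_{i\geq 2}K_{1,n_i},F_2)$ (or the symmetric reduction), whose conjectured value is $\sum_k\max\{n_i+m_j-1:\ i+j=k,\ i\geq 2\}$, and this need not equal $\sum_{k=3}^{s+t}l_k$: if $n_1$ is much larger than $n_2,\dots,n_s$, every $l_k$ is attained at $i=1$, the sums do not telescope, and the vertex-deletion induction undercounts by a large margin. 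The cases the paper does settle are precisely those where one of these two obstructions vanishes (degrees can be forced up to the relevant $l_k$, or the residual pair's formula matches), so your proposal should be read as a correct upper bound plus a program whose key steps are exactly the open difficulties, not as a proof of the conjecture.
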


This conjecture was verified in \cite{7} for star forests containing stars of the same sizes.

\begin{theorem}[Burr, Erd{\H{o}}s, Faudree, Rousseau and Schelp \cite{7}]\label{samesizestars}
For positive integers $s$, $t$, $m$ and $n$,
$\hat{r}(sK_{1,n}, tK_{1,m})=(s+t-1)(m+n-1).$ Moreover, if $ G\rightarrow(sK_{1,n},tK_{1,m}) $ and has $ (s+t-1)(m+n-1) $ edges, then $ G=(s+t-1)K_{n+m-1} $ or $ n=m=2 $ and $ G=lK_3\sqcup (s+t-l-1)K_{1,3}$, for some $l$, $1\leq l\leq s+t-1$.
\end{theorem}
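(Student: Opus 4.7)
The upper bound $\hat r(sK_{1,n},tK_{1,m})\le (s+t-1)(m+n-1)$ is obtained by exhibiting $G_0:=(s+t-1)K_{1,m+n-1}$, which has exactly $(s+t-1)(m+n-1)$ edges, and verifying $G_0\rightarrow(sK_{1,n},tK_{1,m})$ via a double pigeonhole: in every red-blue coloring of $G_0$, each of the $s+t-1$ star centers has degree $m+n-1 > (n-1)+(m-1)$ and is therefore incident to at least $n$ red edges or at least $m$ blue edges, producing a local monochromatic $K_{1,n}$ or $K_{1,m}$; a second pigeonhole over the vertex-disjoint stars then yields a red $sK_{1,n}$ or a blue $tK_{1,m}$.

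For the matching lower bound and the uniqueness clause I would proceed by induction on $s+t$. In the base case $s=t=1$, if $|E(G)|\le m+n-2$, I assign each edge its own color (a trivially proper edge coloring with at most $m+n-2$ classes) and merge those classes into $n-1$ red and at most $m-1$ blue; the resulting $2$-coloring has maximum red degree $\le n-1$ and maximum blue degree $\le m-1$, contradicting $G\rightarrow(K_{1,n},K_{1,m})$. When $|E(G)|=m+n-1$ and $G\rightarrow(K_{1,n},K_{1,m})$, either $\Delta(G)=m+n-1$, in which case a vertex of maximum degree supplies a $K_{1,m+n-1}$ that exhausts the edge budget and $G=K_{1,m+n-1}$; or $\Delta(G)=m+n-2$ with $\chi'(G)\ge m+n-1$, whence $G$ is class~$2$. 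A small case analysis via Vizing's theorem, together with Theorem~\ref{peterthm} in the triangle case $m+n-1=3$, then isolates $G=K_3$, and only when $m=n=2$.

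For the inductive step ($s+t\ge 3$), assume the theorem for all $(s',t')$ with $s'+t'<s+t$ and let $G\rightarrow(sK_{1,n},tK_{1,m})$ with $|E(G)|\le(s+t-1)(m+n-1)$. I would locate a vertex-disjoint subgraph $H\subseteq G$ isomorphic to $K_{1,m+n-1}$ (or to $K_3$ when $m=n=2$) such that $G':=G-V(H)$ arrows $((s-1)K_{1,n},tK_{1,m})$ or its symmetric reduced pair. The induction hypothesis applied to $G'$ yields $|E(G')|\ge (s+t-2)(m+n-1)$, so $|E(G)|\ge |E(H)|+|E(G')|\ge(s+t-1)(m+n-1)$; equality forces $|E(H)|=m+n-1$ and $G'$ to be extremal, and iterating assembles $G$ into the form claimed.

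\textbf{Main obstacle.} The crux is the isolation of the detachable block $H$ in the inductive step. Because $G$ need not be disconnected, one cannot simply peel off a component; instead I plan to stress-test $G$ with $2$-colorings that force all edges at a selected vertex into a single color, combine the resulting monochromatic-subgraph information with Vizing's theorem to constrain $\chi'(G)$, and invoke Theorem~\ref{peterthm} in the case $m=n=2$, where the class-$2$ behavior of odd cycles (most notably the triangle) accounts for the $K_3$ alternative in the extremal structure. Tracking when a triangle can legitimately replace a $K_{1,3}$ in the iterated decomposition is the most delicate aspect of the uniqueness argument.
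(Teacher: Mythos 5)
Your upper bound and your base case are fine, but the inductive step has a genuine gap, and it sits exactly where the whole difficulty of the theorem lies. First, the reduction you propose is not justified: if $H$ is a copy of $K_{1,m+n-1}$ and $G'=G-V(H)$, the natural way to prove $G'\rightarrow((s-1)K_{1,n},tK_{1,m})$ is to extend a free coloring of $G'$ by coloring every edge meeting $V(H)$ red; but the leaves of $H$ may have many neighbors outside $H$, so this extension can create up to $m+n$ pairwise disjoint red stars touching $V(H)$, and you cannot conclude that $s-1$ disjoint red copies of $K_{1,n}$ survive inside $G'$. You acknowledge you must ``locate'' an $H$ for which the reduced arrowing holds, but you give no mechanism for doing so, and this is not a technical loose end --- it is the proof. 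The paper avoids the problem by deleting only a single vertex $v$ of maximum degree (the claim $G-v\rightarrow((s-1)K_{1,n},tK_{1,m})$ is immediate, since a free coloring of $G-v$ extended by making all edges at $v$ red creates at most one new disjoint red star), and then counts $e(G)\geq\deg(v)+e(G-v)$.

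Second, that counting only yields $(s+t-1)(m+n-1)$ when $\Delta(G)\geq m+n-1$; when $\Delta(G)=m+n-2$ your plan collapses entirely, because $G$ contains no copy of $K_{1,m+n-1}$ at all, yet the lower bound must still be proved (and for general $m,n$, not only $m=n=2$). ``Constraining $\chi'(G)$ via Vizing and Petersen'' does not substitute for an argument here. The paper's treatment of this case is the substantive part of the proof: iterating the vertex deletion produces $s+t-1$ vertices of degree $m+n-2$ forming an independent set $W$; the bipartite graph $B$ between $W$ and $V(G)\setminus W$ is class~$1$, so its color classes can be split into $n-1$ red and $m-1$ blue ones giving a $(K_{1,n},K_{1,m})$-free coloring of $B$; and if fewer than $s+t-1$ edges remained inside $V(G)\setminus W$, distributing $s-1$ of them red and the rest blue would give an $(sK_{1,n},tK_{1,m})$-free coloring of $G$, a contradiction. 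This is also precisely where the $K_3$ extremal structure (and, in the refined Theorem~\ref{starsss}, the overlooked family $lC_4\sqcup(t-2l)K_{1,2}$ for $s=m=1$, $n=2$, which the uniqueness clause you are trying to prove actually omits) emerges; your sketch of the uniqueness analysis, which defers everything to ``tracking when a triangle can replace a $K_{1,3}$,'' does not engage with either the $\Delta(G)=m+n-2$ case analysis or the careful attachment analysis needed when $\Delta(G)=m+n-1$ and the new star meets an extremal component of $G-v$.
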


\medskip
Subsequently, the restriction that each star forest could only have stars of the same size was relaxed and a more general condition was considered by Gy{\H{o}}ri and Schelp in \cite{24}. Although they did not completely verify the conjecture for all star forests, they did so for a large class of forests, which provides substantial support for Conjecture \ref{conj}.

\begin{theorem}[Gy{\H{o}}ri and Schelp \cite{24}]
For positive integers $n_1\geq n_2\geq \cdots\geq n_s$ and $m_1\geq m_2\geq \cdots\geq m_t$, let $F_1=\sqcup_{i=1}^{s}K_{1, n_i}$ and $F_2=\sqcup_{j=1}^{t}K_{1, m_j}$. For $k=2,\ldots,s+t$, set $l_k=\max\{n_i+m_j-1 : i+j=k\}$. If ${l_k \choose 2}>\sum_{i=k}^{s+t}l_i$, for all $2\leq k\leq s+t$, then $\hat{r}(F_1, F_2)=\sum_{k=2}^{s+t}l_k$.
\end{theorem}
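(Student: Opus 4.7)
\medskip

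\noindent\textbf{Proof plan.} The plan is to establish the equality $\hat r(F_1, F_2) = \sum_{k=2}^{s+t} l_k$ by proving the two inequalities separately. The upper bound is constructive and, perhaps surprisingly, does not require the hypothesis $\binom{l_k}{2} > \sum_{i=k}^{s+t} l_i$ at all; the full strength of the hypothesis is used only in the lower bound.

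For the upper bound I would take the star forest $H = \sqcup_{k=2}^{s+t} K_{1, l_k}$, which has exactly $\sum_{k=2}^{s+t} l_k$ edges. To prove $H \to (F_1, F_2)$, fix any red/blue colouring of $E(H)$ and let $r_k, b_k$ denote the red and blue degrees at the centre of the $k$-th star component, so $r_k + b_k = l_k$. A Hall-type matching argument shows that a red copy of $F_1$ appears in $H$ if and only if the sorted values $r_{(1)} \ge r_{(2)} \ge \cdots$ satisfy $r_{(i)} \ge n_i$ for every $i \in \{1, \ldots, s\}$, and analogously for blue. If both conditions failed, there would exist $i^* \in \{1, \ldots, s\}$ and $j^* \in \{1, \ldots, t\}$ with at most $i^* - 1$ stars satisfying $r_k \ge n_{i^*}$ and at most $j^* - 1$ satisfying $b_k \ge m_{j^*}$. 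Intersecting the complements produces at least $s + t - i^* - j^* + 1$ indices $k$ for which $l_k = r_k + b_k \le n_{i^*} + m_{j^*} - 2 < l_{i^* + j^*}$; since $(l_k)$ is non-increasing in $k$, every such $k$ must lie in $\{i^* + j^* + 1, \ldots, s+t\}$, a set of only $s + t - i^* - j^*$ elements --- a contradiction.

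For the lower bound I would proceed by induction on $s + t$. The base case $s = t = 1$ is the classical identity $\hat r(K_{1, n_1}, K_{1, m_1}) = n_1 + m_1 - 1 = l_2$, obtained from Vizing's theorem and a matching lower bound on the chromatic index. In the inductive step, take any edge-minimal $G$ with $G \to (F_1, F_2)$ and try to peel off a star-like subgraph carrying exactly $l_2$ edges in such a way that what remains is Ramsey for the pair obtained by deleting the largest star from one of $F_1$ or $F_2$ (according to the colour that ``uses'' the peeled vertex). The hypothesis enters decisively here: applied at each $k$, the inequality $\binom{l_k}{2} > \sum_{i=k}^{s+t} l_i$ says that the edge budget available for indices $\ge k$ cannot accommodate even a single $K_{l_k}$, so the $l_k$ edges forced to appear at stage $k$ cannot be absorbed into a shared dense cluster and must instead constitute a fresh, essentially disjoint, star-like component.

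The main obstacle will be the structural decomposition lemma just sketched: establishing, under the quantitative hypothesis, that any edge-minimal Ramsey graph truly splits into $s + t - 1$ disjoint star-like pieces of sizes $l_2, l_3, \ldots, l_{s+t}$. Without such a lemma, high-degree vertices could in principle share their incident edges inside dense subgraphs, letting $e(G)$ slip below $\sum_{k=2}^{s+t} l_k$; the hypothesis is precisely the quantitative bound that rules this out. Once the decomposition lemma is in place the induction closes routinely, and combined with the matching calculation above the two bounds meet, yielding the claimed equality.
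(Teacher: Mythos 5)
Your upper bound is fine: the Hall-type argument that $\sqcup_{k=2}^{s+t}K_{1,l_k}\rightarrow(F_1,F_2)$ is correct (and, as you note, needs no hypothesis); this is the same unconditional observation the paper makes at the start of Section 2. But be aware that this theorem is not proved in the paper at all — it is quoted from Gy\H{o}ri and Schelp \cite{24} — and your lower bound does not constitute a proof of it. You explicitly defer the entire content to an unproved ``structural decomposition lemma'' asserting that an edge-minimal Ramsey graph splits into $s+t-1$ disjoint star-like pieces of sizes $l_2,\ldots,l_{s+t}$; that lemma is exactly where all the difficulty lives, and as stated it is doubtful. Even in the unconditional star-versus-star case the Ramsey-minimal graphs are disjoint cliques $(s+t-1)K_{n+m-1}$, not stars, so minimal Ramsey graphs do not in general have the star-like structure you posit; under the hypothesis $\binom{l_k}{2}>\sum_{i\ge k}l_i$ cliques of the relevant size are excluded by edge count, but nothing in your sketch shows that no other dense configuration can beat the bound.

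Moreover, the very first step of your peeling induction already has a gap: you need $\Delta(G)\ge l_2$ for any $G\rightarrow(F_1,F_2)$, and this does not follow from a Vizing-type splitting when the parities are unfavourable. The paper's Lemma \ref{decompose} gives a $(K_{1,n},K_{1,m})$-free coloring only when $\Delta(G)\le n+m-3$, or when $\Delta(G)\le n+m-2$ with $n,m$ both odd, and the remark after it (an $n$-regular graph with no perfect matching, $m=2$) shows the borderline case genuinely fails; this parity obstruction is precisely why the paper's unconditional results (Theorems \ref{odd-starforest}, \ref{odd-stars-forest}) carry oddness assumptions. The Gy\H{o}ri--Schelp hypothesis is used in \cite{24} through a theorem on 2-edge-colorings with bounded degree in both colors, where the condition $\binom{l_k}{2}>\sum_{i\ge k}l_i$ (no room for a clique on $l_k$ vertices among the remaining edges) guarantees the existence of a coloring with the required degree bounds; your proposal never supplies an argument of this kind, so the lower bound — and hence the theorem — remains unproved in your write-up.
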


\bigskip
In Theorem~\ref{samesizestars}, some cases are missed for Ramsey minimal graphs. So, in this paper, we first present a much shorter proof for Theorem \ref{samesizestars} including all equality cases (see Theorem~\ref{starsss}). In addition, we prove Conjecture~\ref{conj} for many pairs $(F_1=\sqcup_{i=1}^{s}K_{1, n_i},F_2=\sqcup_{j=1}^{t}K_{1, m_j})$ of star forests, including the cases $s=1$ (Theorem~\ref{star-forest}), $s=2$ and $n_1=n_2$ (Theorem~\ref{thm:s=2}), all $n_i$'s and $m_i$'s are odd (Theorem~\ref{odd-starforest}) and all $n_i$'s are equal to an odd number and $m_1$ is also odd (Theorem~\ref{odd-stars-forest}). Moreover, for some pairs $(F_1, F_2)$ of star forests, Ramsey minimal graphs, i.e.  graphs $F$ with $F\rightarrow(F_1,F_2)$ and $e(F)=\hat{r}(F_1,F_2)$  will be classified exactly.

\section{Main Results}
In this section, the main results of the paper will be presented. Note that by the notations of Conjecture \ref{conj} and  using the simple fact that $K_{1,n+m-1}\rightarrow(K_{1,n},K_{1,m})$, we deduce that if $F=\sqcup_{k=2}^{s+t}K_{1,l_k}$, then $F\rightarrow(F_1,F_2)$ and so, $\hat{r}(F_1,F_2)$ is upper bounded by $e(F)=\sum_{k=2}^{s+t}l_k$. This is certainly the case for all results of this section, therefore we shall always prove just the claimed lower bound for the size Ramsey number of star forests. Thus, to determine $\hat{r}(F_1,F_2)$ for a given pair  $(F_1,F_2)$ of star forests, it is sufficient to show that if $F$ is a graph such that $F\rightarrow(F_1,F_2)$, then $e(F)\geq \sum_{k=2}^{s+t}l_k$.

\bigskip
In what follows, we present a much shorter proof for Theorem \ref{samesizestars}. Beforehand, we prove the following simple lemma that will be used in the proof of next results.

\begin{lemma}\label{decompose}
Let $n, m$ be positive integers and let $G$ be a graph such that either $\Delta(G)\leq m+n-3$, or $\Delta(G)\leq m+n-2$ and $m,n$ are both odd.  Then,  $G$ has a $(K_{1,n},K_{1,m})$-free coloring.
\end{lemma}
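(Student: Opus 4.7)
The goal is to exhibit a partition $E(G) = R \sqcup B$ with $\Delta(R) \le n-1$ and $\Delta(B) \le m-1$; this guarantees that no vertex is the center of a red $K_{1,n}$ or a blue $K_{1,m}$, which is exactly what a $(K_{1,n},K_{1,m})$-free coloring demands. The plan is to treat the two hypotheses separately.

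In the first case, where $\Delta(G) \le m+n-3$, I would invoke Vizing's theorem to properly edge-color $G$ with $\chi'(G) \le \Delta(G)+1 \le m+n-2$ colors. Partitioning the color classes into a red group of size $n-1$ and a blue group of size $m-1$ (padding with empty classes if $\chi'(G)$ is strictly smaller) and using that each color class is a matching, every vertex ends up with red degree at most $n-1$ and blue degree at most $m-1$.

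In the second case, where $\Delta(G) \le m+n-2$ and $m,n$ are both odd, Vizing alone supplies $m+n-1$ matchings, one too many to split as above. Instead I would exploit the fact that $m+n-2$ is even. First, embed $G$ as a spanning subgraph of some $(m+n-2)$-regular graph $G'$, using the classical fact that every graph of maximum degree $k$ is contained in some $k$-regular graph (for instance, by taking two disjoint copies of $G$ together with a suitable bipartite graph on the deficient vertices). Applying Theorem~\ref{peterthm} to the even-regular graph $G'$ produces a decomposition $E(G') = F_1 \sqcup \cdots \sqcup F_{(m+n-2)/2}$ into edge-disjoint $2$-factors. Since $(n-1)/2$ and $(m-1)/2$ are non-negative integers summing to $(m+n-2)/2$, I would color $F_1, \ldots, F_{(n-1)/2}$ red and the remaining $2$-factors blue. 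Every vertex of $G'$ then has red degree exactly $n-1$ and blue degree exactly $m-1$, and restricting the coloring to $E(G) \subseteq E(G')$ only lowers those bounds.

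The principal obstacle I anticipate is the embedding of $G$ into a $(m+n-2)$-regular graph $G'$. Realizing the required bipartite graph between the deficient vertices of the two copies of $G$ needs the Gale--Ryser conditions, and if some vertices' deficiencies are large compared to the number of available partners one may need to take more than two copies of $G$ to avoid parallel edges. I would dispatch this step with a short auxiliary lemma or simply cite it as classical, since once regularization is in hand, the rest of the argument is immediate from Vizing (Case 1) and Petersen (Case 2).
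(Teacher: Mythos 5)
Your proposal is correct and follows essentially the same route as the paper: Vizing's theorem with a split of the $m+n-2$ matchings into $n-1$ red and $m-1$ blue ones in the first case, and in the second case embedding $G$ into an $(m+n-2)$-regular graph, applying Petersen's $2$-factorization, and coloring $(n-1)/2$ of the $2$-factors red. The regularization step you worry about is classical (iterated doubling of copies of $G$ handles it without any Gale--Ryser analysis), and the paper likewise simply asserts it.
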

\begin{proof}
First, suppose that $\Delta(G)\leq m+n-3$. Then, by Vizing's Theorem, there is a proper edge-coloring $c$ for $G$ with at most $m+n-2$ colors. Partition the color classes in $c$ into two sets $A_1,A_2$ of sizes at most $n-1,m-1$, respectively. For each $i\in\{1,2\}$, let $G_i$ be the subgraph of $G$ induced by the edges of colors in $A_i$. Clearly, $G_1$ and $G_2$  decompose the edge set of $G$ and $\Delta(G_1)\leq n-1$ and $\Delta(G_2)\leq m-1$. Coloring all edges of $G_i$, $i\in\{1,2\}$, by the $i$-th color yields a $(K_{1,n},K_{1,m})$-free coloring for $G$, as desired.

Now, suppose that $\Delta(G)=m+n-2$ and $m,n$ are both odd. 
Then,  $G$ can be embedded into a $ \Delta(G)$-regular graph $ H $. By Theorem~\ref{peterthm}, $H$ can be decomposed into $(m+n-2)/2$ two-factors. Now, color all the edges of $G$ which are in the first $(n-1)/2$ two-factors of $H$ by red and other edges of $G$ by blue.  Clearly, the maximum degree of the red (resp. blue) subgraph of $G$ is at most $n-1$ (resp. $m-1$). Therefore, this is a $(K_{1,n},K_{1,m})$-free coloring for $G$. This completes the proof.
\end{proof}
Note that the above lemma is not the case when $\Delta(G)=m+n-2$ and $n$ or $m$ is even. For instance, when $m=2$, an $n$-regular graph without a perfect matching would be a counterexample. Now, we are ready to give an alternative proof for Theorem~\ref{samesizestars} including missing extremal cases.
\begin{theorem}\label{starsss}
For positive integers $s$, $t$, $m$ and $n$, $n\geq m$, we have
$\hat{r}(sK_{1,n}, tK_{1,m})=(s+t-1)(m+n-1).$ Moreover, if $ G\rightarrow(sK_{1,n},tK_{1,m}) $ and $ e(G)=(s+t-1)(m+n-1) $, then $ G=(s+t-1)K_{n+m-1} $ or $ n=m=2 $ and $ G=lK_3\sqcup (s+t-l-1)K_{1,3} $, or $s=m=1$ and $n=2$ and $ G=lC_4\sqcup (t-2l)K_{1,2} $, for some nonnegative integer $l$.
\end{theorem}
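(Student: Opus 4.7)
The plan is to prove the matching lower bound by induction on $s+t$, the upper bound being the one from the discussion preceding the theorem. For the base case $s=t=1$, suppose $G \to (K_{1,n}, K_{1,m})$ and $e(G) \leq n+m-2$. Since every vertex meets at most $e(G)$ edges, $\Delta(G) \leq e(G) \leq n+m-2$. If $\Delta(G) \leq n+m-3$, Lemma~\ref{decompose} gives a free coloring, contradicting $G\to$. Otherwise $\Delta(G) = e(G) = n+m-2$, which forces a single vertex to carry every edge, so $G = K_{1,n+m-2}$; splitting its edges into $n-1$ red and $m-1$ blue yields a free coloring, again contradicting $G\to$. Hence $e(G) \geq n+m-1$. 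For equality, a direct inspection shows $G = K_{1,n+m-1}$ in general, with the additional extremal $K_3$ when $n=m=2$ (the only $3$-edge graph of maximum degree $2$ that arrows $(P_3,P_3)$).

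For the inductive step with $s+t \geq 3$, I split on $\Delta(G)$. In Case~1, $\Delta(G) \geq n+m-1$: pick $v$ with $\deg(v) = \Delta(G)$, and prove the reduction that $G-v \to ((s-1)K_{1,n}, tK_{1,m})$ when $s \geq 2$, and $G-v \to (K_{1,n}, (t-1)K_{1,m})$ when $s = 1$ (so $t \geq 2$). For the first, given a coloring $c$ of $G-v$ with neither red $(s-1)K_{1,n}$ nor blue $tK_{1,m}$, extend $c$ to $G$ by coloring every edge at $v$ red; since $\deg(v) \geq n+m-1 \geq n$, a red $K_{1,n}$ sits at $v$, so by $G\to$ either blue $tK_{1,m}$ occurs (impossible, as blue edges coincide with those of $c$) or red $sK_{1,n}$ occurs, and deleting the at-most-one red star through $v$ leaves $s-1$ disjoint red $K_{1,n}$'s in $G-v$, contradicting $c$. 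The symmetric blue-extension argument handles $s = 1$. Induction on $s+t$ then gives $e(G-v) \geq (s+t-2)(n+m-1)$, hence $e(G) \geq (s+t-1)(n+m-1)$, with equality forcing $\deg(v) = n+m-1$ and $G-v$ extremal. This reconstructs the $(s+t-1)K_{1,n+m-1}$ family, and in the special regime $s=m=1$, $n=2$ also the $C_4$-containing extremals (removing a degree-$2$ vertex of a $C_4$-component yields a $K_{1,2}$ matching the smaller extremal).

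In Case~2, $\Delta(G) \leq n+m-2$, Lemma~\ref{decompose} again furnishes a free coloring contradicting $G\to$, except when $\Delta(G) = n+m-2$ with at least one of $n,m$ even; this edge case is the principal obstacle. For $n=m=2$ the argument is concrete: $G$ is a union of paths and cycles, and paths together with even cycles are $2$-edge-colorable into matchings. If $G$ has a $K_3$-component $T$, a short transfer (color $T$ with $2$ red and $1$ blue, contributing one red $K_{1,2}$ and no blue $K_{1,2}$) shows $G - T$ arrows a reduced pair, and induction yields the $lK_3 \sqcup (s+t-l-1)K_{1,3}$ family at equality; otherwise each odd cycle of length $\geq 5$ forces exactly one monochromatic $K_{1,2}$ that can be assigned to either color, so fewer than $s+t-1$ such cycles allow a near-free coloring contradicting $G\to$, while $\geq s+t-1$ such cycles give $e(G) \geq 5(s+t-1)$, strictly above the lower bound so no new extremal arises. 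For $(n,m)\neq(2,2)$ in the same edge case, the analogous analysis must be carried out by a more delicate rearrangement — likely embedding $G$ in a $\Delta$-regular graph and combining a $2$-factor decomposition with a $1$-factor to address odd parities — to show no additional minimal Ramsey graphs appear; resolving this uniformly across all parities is where the main technical difficulty lies.
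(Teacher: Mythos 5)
Your proposal matches the paper's strategy in Case~1 (remove a vertex of maximum degree, prove $G-v\rightarrow((s-1)K_{1,n},tK_{1,m})$ by the red-extension argument, and induct on $s+t$), and your concrete treatment of $n=m=2$ in Case~2 is close in spirit to the paper's path/cycle analysis. But there is a genuine gap exactly where you flag "the principal obstacle": the case $\Delta(G)=n+m-2$ with $(n,m)$ not both odd and $(n,m)\neq(2,2)$ (for instance $n=3$, $m=2$, or both even). In that regime Lemma~\ref{decompose} is silent, and you have proved neither the lower bound $e(G)\geq(s+t-1)(n+m-1)$ nor the absence of extra extremal graphs; so the theorem is not established, e.g., already for $\hat r(sK_{1,3},tK_{1,2})$. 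Moreover, the repair you sketch --- embedding $G$ in a $\Delta$-regular graph and combining $2$-factors with a $1$-factor --- runs into precisely the obstruction the paper records after Lemma~\ref{decompose}: a regular graph of odd degree need not have a perfect matching (an $n$-regular graph with no $1$-factor is the stated counterexample for $m=2$), so the needed $1$-factor may simply not exist, and this parity failure is why the lemma cannot be pushed to $\Delta=n+m-2$ in the first place.

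The paper closes this case by a different device. Iterating the deletion claim together with Lemma~\ref{decompose}, it finds $s+t-1$ vertices $v_1,\ldots,v_{s+t-1}$, each of degree exactly $n+m-2$ in the successively reduced graphs; since $\Delta(G)=n+m-2$, these vertices are independent. Letting $W=\{v_1,\ldots,v_{s+t-1}\}$ and $B$ the bipartite graph between $W$ and $V(G)\setminus W$, it uses that bipartite graphs are class~I to properly edge-color $B$ with $n+m-2$ colors, declares $n-1$ classes red and $m-1$ blue (a $(K_{1,n},K_{1,m})$-free coloring of $B$), and then, if $e(G)$ were below the claimed bound, only at most $s+t-2$ edges remain inside $V(G)\setminus W$; coloring $s-1$ of them red and the rest blue caps the number of vertex-disjoint red $K_{1,n}$'s at $s-1$ and blue $K_{1,m}$'s at $t-1$, contradicting $G\rightarrow(sK_{1,n},tK_{1,m})$. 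A refinement of the same setup (the leftover edges must form a matching, a degree count on their endpoints in $B$, and a swap of two color classes of the proper edge-coloring) settles the equality case there, leaving only $(s+t-1)K_3$ when $n=m=2$. Some argument of this kind is what your proposal still needs; in addition, your Case~1 equality analysis ("this reconstructs the family") should be fleshed out along the lines of the paper's coloring of the component met by $v$ together with the star at $v$.
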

\begin{proof}
Since $ K_{1,m+n-1}\rightarrow(K_{1,n},K_{1,m}) $, we have $ (s+t-1)K_{1,m+n-1}\rightarrow(sK_{1,n},tK_{1,m}) $.
Thus, $\hat{r}(F_1,F_2)\leq(s+t-1)(m+n-1)$. Now, we prove the lower bound by induction on $s+t$. Let $ G $ be a graph such that $ G\rightarrow(sK_{1,n},tK_{1,m}) $.
If $ \Delta(G)\leq m+n-3 $, then by Lemma \ref{decompose}, there is a $(K_{1,n},K_{1,m})$-free coloring of $G$, a contradiction. Thus, $\Delta(G)\geq m+n-2$.
Let $ v $ be a vertex of maximum degree in $ G $.

\bigskip
\noindent{\bf Claim.} $(G-v) \rightarrow((s-1)K_{1,n},tK_{1,m})$.

\medskip
To prove the claim, 
on the contrary, suppose that $(G-v) \not\rightarrow((s-1)K_{1,n},tK_{1,m})$. Then, by the definition, there is a  $((s-1)K_{1,n},tK_{1,m})$-free coloring  of $G-v$. This coloring can be extended to a $(sK_{1,n},tK_{1,m})$-free coloring of $G$ by coloring all edges incident  with $v$ by the first color,  contradicting that $G\rightarrow(sK_{1,n},tK_{1,m})$. This observation completes the proof of the claim.

\medskip
Thus, $ G-v\rightarrow((s-1)K_{1,n},tK_{1,m}) $ and by the induction hypothesis, $ e(G-v)\geq (s+t-2)(m+n-1)$.
Now, if $\Delta(G)\geq m+n-1$, then  $e(G)\geq (s+t-1)(m+n-1) $ and we are done.

Thus, assume that $ \Delta(G)=m+n-2 $ and there are $ s+t-1 $ vertices, say $ v_1,\ldots, v_{s+t-1} $, in $ G $ such that $ d_{G_{i-1}}(v_i)=m+n-2 $, in which $ G_0=G $ and $ G_i=G-\{v_1,\ldots, v_{i}\} $. Therefore, $ v_i $'s form an independent set in $ G $.

Let $ W=\{v_1,\ldots, v_{s+t-1}\} $ and let $ B $ be the bipatrtite graph induced by the edges between $ W $ and $ V(G)\backslash W $ in $ G $. By Vizing's theorem and the fact that bipartite graphs are in class $ I $, $ \chi'(B)\leq\Delta(B)=\Delta(G)=m+n-2 $. Color all the first $ n-1 $ color classes by red and the $ m-1 $ remaining classes by blue. Clearly, this is a $ (K_{1,n}, K_{1,m}) $-free coloring of $ B $. We extend this coloring to a coloring for $ G $ by giving a $ 2$-coloring for (at most) $ s+t-2 $ remaining edges. Obviously, these edges lie in $ V(G)\backslash W $. Select arbitrarily $ s-1 $ of them and color these edges by red. Finally, color the remaining edges by blue. Since $ B $ has no red copy of $ K_{1,n} $ and each red edge in $ V(G)\backslash W $ participates in at most one vertex disjoint red copy of $ K_{1,n} $, there is no red copy of $ sK_{1,n} $ in $ G $. Similarly, there is no blue $ tK_{1,m} $ in $ G $. But this contradicts the assumption $ G\rightarrow(sK_{1,n},tK_{1,m}). $ Hence, $\hat{r}(F_1,F_2)=(s+t-1)(m+n-1)$.

\bigskip
Now, we are going to characterize the extremal structures. For simplicity, define the graph $H_{m,n}$ to be either $K_{1,3}$ or $K_3$ if $m=n=2$ and to be $K_{1,m+n-1}$, otherwise. Now, let $ G\rightarrow(F_1,F_2) $ and $ e(G)=(s+t-1)(n+m-1) $. There are two possibilities for $ G $. The first is $ \Delta(G)=m+n-1 $ and the second is $ \Delta(G)=m+n-2 $ and $ e(G_{s+t-1})=s+t-1 $.
\medskip

For the first case, $e(G-v)=(s+t-2)(m+n-1)$ and $G-v\rightarrow ((s-1)K_{1,n}, tK_{1,m})$ and $ G-v\rightarrow (sK_{1,n}, (t-1)K_{1,m}) $. Without loss of generality, we may assume that all isolated vertices of $ G-v $ are removed.

First, suppose $s=m=1$ and $n=2$. By the induction hypothesis, $G-v $ is the disjoint union of some $C_4$'s and $K_{1,2}$'s. Note that $\Delta(G)=m+n-1=2$ and so $v$ is not adjacent to vertices of $C_4$'s and the centers of $K_{1,2}$'s. If $N(v)$ is disjoint from $V(G-v)$ or $v$ is adjacent to both leaves of one $K_{1,2}$, then $G$ is also union of some $C_4$'s and $K_{1,2}$'s and we are done. Otherwise, $ G $ has a connected component isomorphic to either a path $ P_5 $ or $ P_7 $.
It is easy to see that $P_5\not\rightarrow (K_{1,2},2 K_{1,1})$ and $P_7\not\rightarrow (K_{1,2},3 K_{1,1})$ (for this, we can color first and last edges of the path by red and others by blue). Hence, $G\not\rightarrow (K_{1,2},t K_{1,1})$.

Now, suppose that either $s\neq 1$, or $m\neq 1$, or $n\neq 2$. Then, by the induction hypothesis,  $G-v $ is the disjoint union of $s+t-2$ graphs $H_{m,n} $. Again, if $N(v)$ is disjoint from $V(G-v)$, then $G$ is the disjoint union of $s+t-1$ graphs $H_{m,n}$ and we are done.
Now, suppose that $N(v)$ contains a vertex $ u $ in a copy of $H_{m,n}$, say $ H_0 $, in $G-v$. If  $ H_0 $ is a star, let $ w $ be the center of $ H_0 $ and if $ H_0 $ is a triangle then let $ w\neq u $ be an arbitrary vertex of $ H_0 $.

Denote the edges incident with $v$ by $ H_1 $ and color edges of $H_0$ and $ H_1 $ such that
in each $ H_i $, $ i=1,2 $, there are exactly $n$ red edges and $m-1$ blue edges, the edges $wu$ and $uv$ are both red and there is no blue copy of $K_{1,m}$ in $ H_0\sqcup H_1 $ (this can be done because $v$ and $w$ cannot be the center of a blue $K_{1,m}$ and in case $m=2$, since $n\geq m$, we can avoid a blue $K_{1,2}$). Also, since $wu$ and $uv$ are both red, there is no two vertex disjoint red copies of $K_{1,n}$ in $ H_0\sqcup H_1 $.
Now, for the remaining edges, color $s-2$ copies of $H_{m,n}$ by red and $t-1$ copies of $H_{m,n}$ by blue. So, there is at most $s-1$ disjoint red copies of $K_{1,n} $ and $t-1$ disjoint blue copies of $K_{1,m}$. Thus, $G\not \rightarrow (sK_{1,n},tK_{1,m})$.

\medskip
For the second case, i.e. when $\Delta(G)=m+n-2$ and $e(G_{s+t-1})=s+t-1$,
if $ m=1 $, then $ \Delta(G)=n-1 $ and we color all the edges by red. Therefore, we may suppose that $ m,n\geq 2 $.
If $ m=n=2 $, then $ \Delta(G)=2 $ and $ G $ is disjoint union of some paths and cycles. We color edges of each path and cycle alternatively by blue and red. Let $ \lambda $ be the number of odd cycles in $ G $. Clearly, the number of monochromatic copies of $ K_{1,2} $ is equal to $ \lambda $. Since $ e(G)=3(s+t-1) $, we have $ \lambda\leq s+t-1 $. If $ \lambda\leq s+t-2 $, then we can color $ G $ such that there are at most  $s-1$ red copies of $K_{1,2}$ and at most $t-1$ blue copies of $K_{1,2}$, and thus, $ G\not\rightarrow(F_1,F_2) $. If $ \lambda=s+t-1 $, then $ G=(s+t-1)K_3 $ and we are done.

Now, let $ m\geq2 $, $ n\geq 3 $  and we show that $ G\not\rightarrow(F_1,F_2) $. Recall that the edge set of $ B $ can be colored by red and blue so that $ B\not\rightarrow(K_{1,n},K_{1,m}) $.
 Consider the induced subgraph $F$ on $ V(G)\backslash W $.  By the assumption, $F$ has exactly $ s+t-1 $ edges. If two of these edges, say $ e_1 $ and $ e_2 $, are adjacent, then we color $ e_1, e_2 $ and $ s-2 $ more arbitrarily selected edges of $F$ by red and $ t-1 $ remaining edges of $F$ by blue. Clearly, there are at most $t-1$ disjoint blue copies of $K_{1,m}$ and at most $ (s-1)$ disjoint red copies of $K_{1,n} $ in $ G $ which is a contradiction.
Hence, the edges $ e_1, e_2, \ldots, e_{s+t-1} $ form a matching in $F $.
Note that every red $ K_{1,n} $ in $F_1$ or blue $ K_{1,m} $ in $F_2$ should contain one of $ e_i $'s.
Now, we claim that for all $ i $, $ e_i $ has one endpoint, say $ u_i $, which is incident with $ n-1 $ red edges in $ B $ and another endpoint, say $ v_i $, which is incident with $ m-1 $ blue edges in $ B $.
To see this, note that if both endpoints of $e_i$ are incident with at most $n-2$ red edges in $B$, then we can color $e_i$ and $s-1$ more edges in $F$ by red and $t-1$ remaining edges by blue and this implies that $ G\not\rightarrow(F_1,F_2) $.
Thus, there is an endpoint of $e_i$, say $u_i$, with red degree equal to $n-1$ in $B$. Similarly, there is an endpoint of $e_i$, say $v_i$, with blue degree equal to $m-1$ in $B$. Also, since $\Delta(B)= m+n-2$, $u_i$ and $v_i$ are distinct.
This proves the claim.

On the other hand, since $ B $ has exactly $ (s+t-1)(m+n-2) $ edges, we have $ \deg_B(u_i)=n-1 $ and $ \deg_B(v_i)=m-1 $. Recall that we applied Vizing's theorem to color the edges of $ B $. Now, we exchange color of the first and the last classes. In this way, every $ u_i $ is incident with $ n-2 $ red edges and one blue edge, and every $ v_i $ is incident with $ m-2 $ blue edges and one red edge.
Now, we color all the $ s+t-1 $ edges in $ F $ by red. Being $n\geq 3$ guarantees that neither a red $ K_{1,n} $ nor a blue $ K_{1,m} $ is seen. Hence, $ G\not\rightarrow(sK_{1,n}, tK_{1,m}) $.
\end{proof}

\bigskip
Now, for the next step, we are going to prove Conjecture~\ref{conj} when $s=1$. In other words, we determine the size Ramsey number of a star versus an arbitrary star forest. Moreover, Ramsey minimal graphs will be completely characterized.

\begin{theorem}\label{star-forest}
For given positive integers $n$ and $m_1\geq m_2\geq \cdots\geq m_t\geq2$, we have $\hat{r}(K_{1,n}, \sqcup_{j=1}^{t}K_{1, m_j}) = \sum_{j=1}^{t}(n+m_j-1)$. Moreover, if $F\rightarrow(K_{1,n},\sqcup_{j=1}^{t}K_{1, m_j})$ and $e(F)=\sum_{j=1}^{t} (n+m_j-1)$, then $F={\bigsqcup}_{j=1}^{t}G_j$, where for each $ j $, either $ G_j=K_{1,n+m_j-1} $, or $ n=m_j=2 $ and $G_j=K_3$.
\end{theorem}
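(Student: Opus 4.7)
I would prove the theorem by induction on $t$, using Theorem~\ref{starsss} (with $s=t=1$) as the base case $t=1$. For the inductive step, let $F\rightarrow(K_{1,n},\sqcup_{j=1}^{t}K_{1,m_j})$; I aim to show $e(F)\geq\sum_{j=1}^{t}(n+m_j-1)$, and to classify extremal $F$.

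\textbf{Two main tools.} First, a \emph{deletion lemma}: for any maximum-degree vertex $v$ of $F$, the graph $F-v$ satisfies $F-v\rightarrow(K_{1,n},\sqcup_{j=2}^{t}K_{1,m_j})$. To prove this I would start from a hypothetical $(K_{1,n},\sqcup_{j=2}^{t}K_{1,m_j})$-free coloring $\phi$ of $F-v$ and extend $\phi$ to $F$ by coloring every edge at $v$ blue. The red subgraph is unchanged, so there is no red $K_{1,n}$; while any blue $\sqcup_{j=1}^{t}K_{1,m_j}$ in the extension uses $v$ in at most one star $S_i$, and the remaining $t-1$ vertex-disjoint blue stars all lie in $F-v$. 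Sorting their sizes and comparing them termwise with $(m_2,\ldots,m_t)$, the monotonicity $m_1\geq m_2\geq\dots\geq m_t$ guarantees that dropping any $m_i$ leaves a sequence dominating $(m_2,\ldots,m_t)$, producing a blue $\sqcup_{j=2}^{t}K_{1,m_j}$ in $F-v$ and contradicting $\phi$. Second, a \emph{degree bound}: by Lemma~\ref{decompose} applied to $(n,m_1)$, if $\Delta(F)\leq n+m_1-3$, or $\Delta(F)\leq n+m_1-2$ with both $n,m_1$ odd, then $F$ admits a $(K_{1,n},K_{1,m_1})$-free—hence $(K_{1,n},\sqcup_{j=1}^{t}K_{1,m_j})$-free—coloring, contradicting $F\rightarrow$. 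Thus $\Delta(F)\geq n+m_1-2$ always, and $\Delta(F)\geq n+m_1-1$ whenever $n,m_1$ are both odd.

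\textbf{Putting them together.} If $\Delta(F)\geq n+m_1-1$ with $v$ a maximum-degree vertex, the deletion lemma and the induction hypothesis yield $e(F)=\deg(v)+e(F-v)\geq(n+m_1-1)+\sum_{j=2}^{t}(n+m_j-1)=\sum_{j=1}^{t}(n+m_j-1)$, as required. The delicate remaining case is $\Delta(F)=n+m_1-2$ with $n$ or $m_1$ even; here the naive bound falls one edge short, so I would argue that $e(F-v)=\sum_{j=2}^{t}(n+m_j-1)$ cannot actually hold. By the inductive extremal classification this equality would force $F-v=\bigsqcup_{j=2}^{t}G_j$ with each $G_j\in\{K_{1,n+m_j-1},K_3\}$. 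A direct degree count then forces $m_j\leq m_1-1$ for every $j\geq 2$ (the center of a star $G_j$ has degree $n+m_j-1\leq\Delta(F)=n+m_1-2$; a $K_3$ component requires $n=m_j=2$, reducing to Theorem~\ref{starsss} when $m_1=2$, so we may assume $m_1\geq 3$). Now I would construct an explicit $(K_{1,n},\sqcup_{j=1}^{t}K_{1,m_j})$-free coloring of $F$: color each star $G_j$ with $n-1$ red and $m_j$ blue edges at its center $c_j$ (or one red edge and a blue $K_{1,2}$ in a $K_3$), color $v$'s edges with $n-1$ red and $m_1-1$ blue, and if a center $c_j$ lies in $N(v)$ (which forces $m_j\leq m_1-2$ for degree reasons) and $vc_j$ is red, switch $G_j$'s internal coloring to $n-2$ red and $m_j+1$ blue at $c_j$ so that $c_j$'s red degree stays $n-1$. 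Then every vertex of $F$ has blue degree at most $\max(m_1-1,m_j+1)\leq m_1-1$, so no blue $K_{1,m_1}$ exists, and a fortiori no blue $\sqcup_{j=1}^{t}K_{1,m_j}$—contradicting $F\rightarrow$.

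\textbf{Extremal characterization and the main obstacle.} For equality $e(F)=\sum_{j=1}^{t}(n+m_j-1)$, the above forces $\Delta(F)=n+m_1-1$ together with $e(F-v)=\sum_{j=2}^{t}(n+m_j-1)$, so by induction $F-v=\bigsqcup_{j=2}^{t}G_j$ has the claimed form. I would finish by showing that any edge from $v$ to a non-leaf vertex of some $G_j$ permits a variant of the free coloring constructed above, which forces $v$'s component in $F$ to be precisely $K_{1,n+m_1-1}$ (or $K_3$ in the $n=m_1=2$ subcase), giving $F=\bigsqcup_{j=1}^{t}G_j$. The hard part will be the explicit free coloring in the $\Delta(F)=n+m_1-2$ case, particularly when $n=2$ (forcing the red subgraph to be a matching) and $K_3$ components appear in $F-v$: one has to choose the "center" of each $K_3$ compatibly with $N(v)$ while arranging the unique red edge at $v$ so that no red $K_{1,2}$ appears at any leaf or $K_3$-vertex. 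The uniform inequality $m_j\leq m_1-1$ is the crucial ingredient that keeps all blue degrees strictly below $m_1$, so that the coloring succeeds.
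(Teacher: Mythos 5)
Your route (induction on $t$, deleting a vertex of maximum degree, and using Lemma~\ref{decompose}) is genuinely different from the paper's, which inducts on $n$ by deleting a maximum matching $M$: since $M$ is a matching, a red $K_{1,n}$ uses at most one edge of $M$, so $F\setminus M\rightarrow(K_{1,n-1},\sqcup_j K_{1,m_j})$ and $e(M)\geq t$, giving the bound with no parity case at all. Your deletion lemma and the main case $\Delta(F)\geq n+m_1-1$ are fine, and your treatment of the troublesome case $\Delta(F)=n+m_1-2$ with $e(F-v)=\sum_{j\geq 2}(n+m_j-1)$ is plausible (though the $n=2$ subcase you flag really does need the full argument that a red matching can be chosen covering every vertex of degree $n+m_1-2$, including the $m_1=3$ triangle configurations; this is sketched, not proved).

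The genuine gap is in the extremal characterization. Your contradiction in the branch $\Delta(F)=n+m_1-2$ only applies when $e(F-v)$ attains the minimum $\sum_{j\geq 2}(n+m_j-1)$; it does not exclude a Ramsey graph with $\deg(v)=\Delta(F)=n+m_1-2$, $e(F-v)=\sum_{j\geq 2}(n+m_j-1)+1$ and total size exactly $\sum_{j=1}^{t}(n+m_j-1)$. For such a graph the inductive classification of $F-v$ is unavailable, so your assertion that equality "forces $\Delta(F)=n+m_1-1$" is unjustified; ruling out this configuration is exactly the kind of work the paper's second case in Theorem~\ref{starsss} had to do, and your sketch contains no substitute for it (the paper's proof of Theorem~\ref{star-forest} avoids it entirely via the matching decomposition: equality forces $e(M)=t$ and $F\setminus M$ extremal, after which triangles and cross-component matching edges are excluded). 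A second, smaller omission in the same part: you only propose to forbid edges from $v$ to \emph{non-leaf} vertices of the components $G_j$, but an edge from $v$ to a leaf of some $G_j$ equally destroys the claimed structure and must be excluded; a degree-capping coloring does not do this, and one needs an argument like the paper's, where the blue subgraph has exactly $\sum_j m_j$ edges but contains a blue $P_4$ (or two blue stars sharing a vertex), so it cannot decompose into the required $t$ disjoint stars.
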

\begin{proof}
	We use induction on $n$ to show that if $F\rightarrow(K_{1,n}, \sqcup_{j=1}^{t}K_{1, m_j})$, then $e(F)\geq \sum_{j=1}^{t}(n+m_j-1)$. If $n=1$, then $F\rightarrow(K_2, \sqcup_{j=1}^{t}K_{1, m_j})$ implies that $e(F)\geq\sum_{j=1}^{t}m_j$.
	
	\medskip
	Now, let $n\geq 2$ and  $F$ be a graph such that $F\rightarrow(K_{1,n},\sqcup_{j=1}^{t}K_{1, m_j})$. Also, let $M$ be the maximum matching in $F$. Clearly $|M|\geq t$, because if all edges of $F$ are colored by blue, then $\sqcup_{j=1}^{t}K_{1, m_j}\subseteq F$. Let $F\setminus M$ be the graph obtained from $F$ by deleting all edges of $M$.
	
	\bigskip
	\noindent{\bf Claim.} $(F\setminus M) \rightarrow(K_{1,n-1}, \sqcup_{j=1}^{t}K_{1, m_j})$.
	
	\medskip
	To prove the claim, consider a red/blue coloring of $ F\setminus M $ and extend it to a coloring of $ F $ by coloring the edges of $ M $ by red. Hence, there is either a red $ K_{1,n} $ or a blue $ \sqcup_{j=1}^{t}K_{1, m_j} $ in $ F $.
	In the latter case, $ F\setminus M $ contains a blue $ \sqcup_{j=1}^{t}K_{1, m_j} $ and in the earlier case, at most one edge of the red $ K_{1,n} $ is in $ M $, so $ F\setminus M $ contains a red $ K_{1,n-1} $. This completes the proof of the claim.
	
	\bigskip
Applying the induction hypothesis and the claim, we have 
	$$e(F\setminus M)\geq\sum_{j=1}^{t}(m_j+n-2).$$
	Therefore,  $e(F)= e(M)+e(F\setminus M)\geq\sum_{j=1}^{t}(n+m_j-1)$ and we are done.
	
	\bigskip
	Now, we use induction on $n$ to  characterize the extremal structures. Suppose that $F\rightarrow (K_{1,n},\sqcup_{j=1}^{t}K_{1, m_j})$ and $e(F)= \sum_{j=1}^t n+m_j-1 $. For $n=1$, it is obvious that $F=\bigsqcup_{j=1}^{t}K_{1,m_j}$. Now, let $n\geq 2$ and $M$ be the maximum matching of $F$. By the above arguments, $e(M)\geq t$ and $(F\setminus M) \rightarrow(K_{1,n-1}, \sqcup_{j=1}^{t}K_{1, m_j})$. Thus, $ e(F\setminus M)=\sum_{j=1}^{t}(n+m_j-2) $ and $ e(M)=t $.
	
	\medskip
By the induction hypothesis, $F\setminus M$ contains exactly $t$ components such that each component is either a star or a triangle.
We claim that there is no triangle in $ F\setminus M $. Otherwise, we have $ n=3 $ and in this case, color all the triangles and two arbitrary edges of each star in $F\setminus M$ by red and all the remaining edges of $ F $ by blue. Clearly, there is no red copy of $ K_{1,n} $ and the number of blue edges is strictly less than $ \sum_{j=1}^{t} m_j $ which is a contradiction. Hence, $ F\setminus M=\sqcup_{j=1}^t K_{1, n+m_j-2}$.

Now, we claim that for every edge $ e\in M $, the endpoints of $ e $ cannot be in two different components of $ F\setminus M $. On the contrary, suppose that the endpoints of $ e $ are in two components of $ F\setminus M $ say $ S_1 $ and $ S_2 $. Color $ n-1 $ edge of each star in $ F\setminus M $ by red and all other edges of $ F $ by blue such that $ S_1\cup S_2\cup e $ contains a blue copy of $ P_4 $, a path of length three. It is obvious that there is no red copy of $ K_{1,n} $ and the number of blue edges is exactly $ \sum_{j=1}^{t} m_j $.  Existence of a blue $ P_4 $ in $ F $ implies that $ F $ contains no blue copy of $ F_2 $. This observation proves the claim.

The maximality of $ M $ implies that each edge $ e $ of $M$ is either incident with a center of a star in $F\setminus M$, or $n=3$ and $e$ is incident with the leaves of a $K_{1,2}$ in $ F\setminus M $. Therefore, $F={\bigsqcup}_{j=1}^{t}G_j$, where for each $ j $, either $ G_j=K_{1,n+m_j-1} $, or  $ n=m_j=2 $ and $G_j= K_3 $. This completes the proof.
\end{proof}

\bigskip
Here, we determine the exact value of the size Ramsey number of two stars of the same sizes versus an arbitrary star forest. 

\begin{theorem}\label{thm:s=2}
	For positive integers $ n $ and $ m_1\geq m_2\geq\cdots\geq m_t\geq2 $, let $ F_1=2K_{1,n} $ and $ F_2=\sqcup_{i=1}^t K_{1,m_i} $. Then  $\hat{r}(F_1,F_2)=n+m_1-1+\sum_{i=1}^{t}(n+m_i-1).$
\end{theorem}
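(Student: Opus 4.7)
The plan is to reduce to Theorem~\ref{star-forest} by deleting a vertex $v$ of maximum degree, following the strategies of Theorems~\ref{starsss} and~\ref{star-forest}. Write $F_1=2K_{1,n}$ and $F_2=\sqcup_{i=1}^{t}K_{1,m_i}$; by the opening remark of this section it suffices to show that every $F$ with $F\rightarrow(F_1,F_2)$ satisfies $e(F)\geq (n+m_1-1)+\sum_{i=1}^{t}(n+m_i-1)$. The first summand will account for $\deg(v)$ and the second will follow from Theorem~\ref{star-forest} applied to $F-v$.

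I would first invoke Lemma~\ref{decompose} to force $\Delta(F)\geq n+m_1-2$: a $(K_{1,n},K_{1,m_1})$-free colouring of $F$ is automatically $(F_1,F_2)$-free, since the absence of a red $K_{1,n}$ forbids $2K_{1,n}$ and the absence of a blue $K_{1,m_1}$ forbids $F_2$. Fix a vertex $v$ of maximum degree in $F$ and prove the key reduction $F-v\rightarrow(K_{1,n},F_2)$ by the standard extension trick: given a hypothetical $(K_{1,n},F_2)$-free colouring of $F-v$, colour every edge at $v$ red. The blue subgraph is unchanged, so no blue $F_2$ appears. Every red $K_{1,n}$ in $F$ is centred either at $v$ itself, or at some neighbour $u\in N(v)$ whose red degree in $F-v$ was exactly $n-1$, in which case the star must use the edge $uv$. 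Hence every red $K_{1,n}$ contains $v$, so no two are vertex-disjoint and no red $2K_{1,n}$ exists, contradicting $F\rightarrow(F_1,F_2)$. Theorem~\ref{star-forest} now yields $e(F-v)\geq \sum_{i=1}^{t}(n+m_i-1)$.

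If $\deg(v)\geq n+m_1-1$ then $e(F)=e(F-v)+\deg(v)$ finishes at once. Otherwise $\deg(v)=n+m_1-2$ and, by Lemma~\ref{decompose}, $n$ and $m_1$ are not both odd, which costs us one edge; a strict inequality $e(F-v)\geq \sum(n+m_i-1)+1$ would recover it, so assume equality. The extremal part of Theorem~\ref{star-forest} then forces $F-v=\sqcup_{j=1}^{t}G_j$ with each $G_j$ equal to $K_{1,n+m_j-1}$ or (only when $n=m_j=2$) to $K_3$. If $G_1=K_{1,n+m_1-1}$ then $\Delta(F-v)\geq n+m_1-1>\Delta(F)$, impossible; so $G_1=K_3$, forcing $n=m_1=2$ and hence $m_i=2$ for every $i$, while the bound $\Delta(F)=2$ rules out any $K_{1,3}$ component, leaving $F-v=tK_3$ and $\deg(v)=2$.

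The main obstacle is this last degenerate configuration, where Theorem~\ref{star-forest} no longer supplies the missing edge and one must exhibit an explicit $(F_1,F_2)$-free colouring of $F$ by hand. The key is that $\deg(v)=2$ forces the red subgraph to sit on at most five vertices, too few to house a $2K_{1,2}$. For instance, if the two neighbours of $v$ lie in a common triangle $T$, one colours the five edges of $T\cup\{va,vb\}$ red and the remaining $t-1$ triangles blue (red spans $4$ vertices, blue has only $t-1$ vertex-disjoint $K_{1,2}$'s); if the two neighbours lie in distinct triangles $T_a$ and $T_b$, one colours $T_a\cup\{va\}$ red and everything else blue (red again spans $4$ vertices, while blue consists of $T_b$ with the pendant edge $vb$ together with $t-2$ lone triangles, giving $1+(t-2)=t-1$ disjoint blue $K_{1,2}$'s). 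Analogous splits dispose of the cases in which $v$ has one or two leaf neighbours. In every case $F\not\rightarrow(F_1,F_2)$, contradicting the hypothesis, so this configuration cannot occur and the desired bound holds in all cases.
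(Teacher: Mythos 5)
Your argument follows the paper's proof essentially step for step until the very end: Lemma~\ref{decompose} forces $\Delta(F)\geq n+m_1-2$, deleting a maximum-degree vertex $v$ and the red-extension trick give $F-v\rightarrow(K_{1,n},F_2)$, Theorem~\ref{star-forest} together with its extremal characterization handles both $\deg(v)\geq n+m_1-1$ and the equality case, and you correctly reduce everything to the configuration $n=m_1=\cdots=m_t=2$, $F-v=tK_3$, $\deg(v)=\Delta(F)=2$. The problem is with how you finish this last configuration. Since every vertex of $tK_3$ already has degree $2$ in $F-v$ and $\Delta(F)=2$, the neighbours of $v$ cannot lie in any triangle, so the two cases you work out in detail (neighbours in a common triangle, neighbours in distinct triangles) are vacuous; the only case that can actually occur is the one you dismiss with ``analogous splits dispose of \dots'', namely $F=K_{1,2}\sqcup tK_3$ with both neighbours $a,b$ of $v$ being leaves.

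For that case the direct analogues of your splits fail: colouring the component of $v$ red and the rest blue leaves $t$ blue triangles and hence a blue $tK_{1,2}=F_2$; colouring only the edge $va$ red, or only two edges of one triangle red, likewise leaves $t$ vertex-disjoint blue copies of $K_{1,2}$. To kill a blue $F_2$ you must make red hit two components, destroying the blue $K_{1,2}$ in each: one red edge at $v$ \emph{and} two red edges inside a single triangle, all other edges blue (this is exactly the paper's ``exchange one blue edge in one triangle'' colouring). Then red has only one $K_{1,2}$ (so no $2K_{1,2}$, consistent with your ``red spans at most five vertices'' principle), and blue has only $t-1$ disjoint copies of $K_{1,2}$. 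So the idea you state is compatible with the correct colouring, but none of the colourings you actually describe produce it, and as written the only realizable extremal case is left unproven; the gap is small and fixable, but it is precisely the step where a specific construction is required.
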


\begin{proof}
	It is sufficient to show that if $G$ is a graph such that $G\rightarrow(F_1,F_2)$, then $e(G)\geq n+m_1-1+\sum_{i=1}^{t}(m_i+n-1)$.
	
	Let $ G $ be a graph such that $ G\rightarrow(F_1,F_2) $. If $ \Delta(G)\leq n+m_1-3 $, then by Lemma \ref{decompose}, there exists a $(K_{1,n},K_{1,m_1})$-free coloring of $G$, contradicting  $ G\rightarrow(F_1,F_2) $. Now, let $ \Delta(G)\geq m_1+n-1 $. Thus, there is a vertex $ v\in V(G) $ such that $ \deg(v)\geq m_1+n-1 $.
	
	\medskip
	With the same argument as in the proof of Theorem \ref{starsss}, $G-v\rightarrow(K_{1,n}, F_2)$. Thus,  by Theorem~\ref{star-forest}, $e(G-v)\geq \sum_{i=1}^{t}(m_i+n-1) $ and so, $ e(G)\geq \deg(v)+e(G-v)\geq n+m_1-1+\sum_{i=1}^{t}(m_i+n-1).$

	Hence, we may assume that $ \Delta(G)=n+m_1-2 $ and $ \deg(v)=n+m_1-2 $.
	If $ e(G-v)> \sum_{i=1}^{t}(m_i+n-1) $, then $e(G)> n+m_1-2+\sum_{i=1}^{t}(m_i+n-1)$ and we are done. Thus, we may assume that $ e(G-v)= \sum_{i=1}^{t}(m_i+n-1) $. As, $ G-v\rightarrow(K_{1,n}, F_2) $, by Theorem~\ref{star-forest}, $ G-v=\sqcup_{i=1}^tG_i $ such that for every $i$, $ G_i=K_{1,n+m_i-1} $ or $ G_i=K_3 $.\\
	If $ G_j=K_{1,n+m_1-1} $ for some $ j $, then $ \Delta(G)\geq n+m_1-1 $ which contradicts $\Delta(G)=n+m_1-2 $.
	Therefore,  $ G-v=tK_3 $ and for all $i$, $1\leq i\leq t$, we have $ n=m_i=2 $. Since $ \Delta(G)=2 $, $ v $ has no neighbor in $ G-v $ and so $ G=K_{1,2}\cup tK_3 $. Now, we color an arbitrary edge of each component of $ G $ by red and the rest by blue. Then, we exchange the color of a blue edge in exactly one of the triangles by red. Clearly, there is only one red copy of $ K_{1,2} $ and $ t-1 $ blue copies of $ K_{1,2} $ which contradicts $ F\rightarrow(2K_{1,n},\cup_{i=1}^tK_{1,m_i}) $.
\end{proof}

\bigskip
Now, in the sequel, we determine the size Ramsey number of star forests containing stars of odd sizes. 

%
%
\begin{theorem}\label{odd-starforest}
	Let  $n_{1}\geq n_{2}\geq \cdots\geq n_s$ and $m_1\geq m_2\geq \cdots\geq m_t$ be odd positive integers and $F_1=\sqcup_{i=1}^{s}K_{1,n_i}$ and $F_2=\sqcup_{j=1}^{t}K_{1,m_j}$. Then, $\hat{r}(F_1,F_2)= \sum_{k=2}^{s+t}l_k,$ where, $l_k=\max\{n_i+m_j-1: ~ i+j=k\}.$
\end{theorem}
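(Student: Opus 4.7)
The plan is to prove the lower bound $e(G)\geq\sum_{k=2}^{s+t}l_k$ for any $G$ with $G\rightarrow(F_1,F_2)$, since the matching upper bound is noted at the start of Section~2. The method is an iterative vertex-deletion argument that, at every stage, carries a full family of reduced Ramsey properties of the current graph simultaneously and invokes Lemma~\ref{decompose} (through the oddness of the star sizes) to extract the required maximum-degree lower bound.

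I would first establish a two-sided single-step reduction generalising the claim inside Theorem~\ref{starsss}: for any graph $H$, any star forests $H_1=\sqcup_{i=1}^{s'}K_{1,n'_i}$ and $H_2=\sqcup_{j=1}^{t'}K_{1,m'_j}$ (with sizes listed non-increasingly), and any vertex $v\in V(H)$, if $H\rightarrow(H_1,H_2)$ then \emph{both} $H-v\rightarrow(H_1-K_{1,n'_1},H_2)$ and $H-v\rightarrow(H_1,H_2-K_{1,m'_1})$. The proof is the standard extension trick (colour every edge at $v$ red or blue respectively); the one novelty is that if the extended colouring contained a red $H_1$ meeting $v$ in a component $K_{1,n'_{i_0}}$, then the remaining $s'-1$ red stars in $H-v$ have sizes pointwise dominating $(n'_2,\ldots,n'_{s'})$ and therefore realise $H_1-K_{1,n'_1}$.

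Set $G_0=G$ and recursively let $v_{k+1}$ be a vertex of maximum degree in $G_k$ and $G_{k+1}=G_k-v_{k+1}$. Write $F_1^{(a,b)}=\sqcup_{i=a+1}^{s}K_{1,n_i}$ and $F_2^{(a,b)}=\sqcup_{j=b+1}^{t}K_{1,m_j}$. By induction on $k$, using the two-sided reduction at each step (any $(a',b')$ with $a'+b'=k+1$ and $a'\geq 1$ is reached from the level-$k$ position $(a'-1,b')$, and similarly if $b'\geq 1$), one obtains the central claim
\[
G_k\rightarrow(F_1^{(a,b)},F_2^{(a,b)})\quad\text{for every }(a,b)\text{ with }a+b=k,\ 0\leq a\leq s,\ 0\leq b\leq t.
\]
For any $(i,j)$ with $i+j=k+2$, $1\leq i\leq s$, $1\leq j\leq t$, setting $(a,b)=(i-1,j-1)$ yields $G_k\rightarrow(K_{1,n_i},K_{1,m_j})$; oddness of $n_i$ and $m_j$ together with the contrapositive of Lemma~\ref{decompose} gives $\Delta(G_k)\geq n_i+m_j-1$, whence $\Delta(G_k)\geq l_{k+2}$ after maximising over admissible $(i,j)$. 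Summing $\deg_{G_k}(v_{k+1})=\Delta(G_k)\geq l_{k+2}$ over $k=0,\ldots,s+t-2$ delivers $e(G)\geq\sum_{k=2}^{s+t}l_k$.

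The main obstacle is the need to maintain the \emph{entire} family of reductions in parallel rather than fixing any single reduction sequence. A naive induction that always removes $K_{1,n_1}$ from $F_1$ (or always $K_{1,m_1}$ from $F_2$) traces a single monotone lattice path of pairs $(a+1,b+1)$ and recovers only the single value $n_{a+1}+m_{b+1}-1$ on that path, which is often strictly less than $l_{k+2}$; indeed, for odd-sized pairs such as $F_1=K_{1,11}\sqcup K_{1,7}\sqcup K_{1,1}$ and $F_2=K_{1,11}\sqcup K_{1,5}\sqcup K_{1,3}$, the unique pairs achieving the maxima $l_2,\ldots,l_{s+t}$ lie on no monotone lattice path at all. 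Carrying every reduction simultaneously is exactly what lets Lemma~\ref{decompose} be invoked at the argmax of each anti-diagonal, and this is where the global oddness hypothesis pays off.
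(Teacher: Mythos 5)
Your proposal is correct and follows essentially the same route as the paper: iterated deletion of a maximum-degree vertex, the two-sided one-step reduction (delete the largest star from either forest) so that after $k$ deletions every reduced pair along the anti-diagonal $a+b=k$ is still forced, and then Lemma~\ref{decompose} with the oddness hypothesis to get $\Delta(G_k)\geq l_{k+2}$ and sum the degrees. Your write-up merely makes explicit the bookkeeping the paper compresses into ``by continuing this process,'' including the domination argument in the reduction and the observation that the maximizing pairs need not lie on a single monotone lattice path.
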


\begin{proof}
Suppose that $G$ is a graph such that $G\rightarrow (F_1,F_2)$. We are going to prove that $e(G)\geq \sum_{k=2}^{s+t}l_k$. First, note that by Lemma~\ref{decompose}, $\Delta(G)\geq m_1+n_1-1$.

	Now, let $v_1$ be a vertex of degree at least $l_2$ in $G$. Then, one may see that $G-v_1 \rightarrow (\sqcup_{i=2}^s K_{1,n_i}, F_2)$ and $G-v_1 \rightarrow (F_1, \sqcup_{j=2}^t K_{1,m_j})$. Thus, again by Lemma~\ref{decompose}, we have $\Delta(G-v_1) \geq \max\{n_2+m_1-1, n_1+m_2-1\}=l_3$. Now, choose a vertex $v_2$ of degree at least $l_3$ in $G-v_1$. By continuing this process, one we may find vertices $v_1,v_2,\ldots, v_{s+t-1}$ such that for all $k\in\{1,\ldots s+t-1\}$, the degree of $v_k$ in $G\setminus\{v_1,\ldots,v_{k-1}\}$ is at least $l_{k+1}$. This proves that $e(G)\geq \sum_{k=2}^{s+t} l_k$. This completes the proof.
\end{proof}

\bigskip
As the last result of this paper, we prove the following theorem which determines the size Ramsey number of arbitrary star forests under a certain condition.
\begin{theorem}\label{odd-stars-forest}
	Let $s$, $n$ and $m_1\geq m_2\geq \cdots\geq m_t\geq 2$ be positive integers. If $n$ and $m_1$ are both odd, then $$\hat{r}(sK_{1,n}, \sqcup_{j=1}^{t}K_{1, m_j})=(s-1)(n+m_1-1)+\sum_{j=1}^{t} (n+m_j-1).$$
	Moreover, if $n$ and $m_1$ are both odd, $G\rightarrow(sK_{1,n}, \sqcup_{j=1}^{t}K_{1, m_j})$ and $e(G)=(s-1)(n+m_1-1)+\sum_{j=1}^{t} (n+m_j-1)$, then $G=(s-1)K_{1,n+m_1-1}\sqcup_{j=1}^{t}K_{1,n+m_j-1}$.
\end{theorem}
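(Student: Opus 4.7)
The plan is to induct on $s$, with base case $s=1$ given by Theorem~\ref{star-forest}. The upper bound is witnessed by the graph $H:=(s-1)K_{1,n+m_1-1}\sqcup\sqcup_{j=1}^{t}K_{1,n+m_j-1}$, which has the claimed number of edges and arrows $(sK_{1,n},F_2)$ since each $K_{1,n+m-1}\rightarrow(K_{1,n},K_{1,m})$.

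For the lower bound with $s\geq 2$, let $G\rightarrow(sK_{1,n},F_2)$. Since $n$ and $m_1$ are both odd, Lemma~\ref{decompose} forbids $\Delta(G)\leq n+m_1-2$ (otherwise $G$ would have a $(K_{1,n},K_{1,m_1})$-free, hence $(sK_{1,n},F_2)$-free, coloring). Thus I pick $v$ with $\deg(v)\geq n+m_1-1$. As in the proof of Theorem~\ref{starsss}, any $((s-1)K_{1,n},F_2)$-free coloring of $G-v$ extends to $G$ by coloring every edge incident to $v$ red, so $G-v\rightarrow((s-1)K_{1,n},F_2)$. The induction hypothesis applied to $G-v$ gives $e(G-v)\geq(s-2)(n+m_1-1)+\sum_{j=1}^{t}(n+m_j-1)$, and then $e(G)=\deg(v)+e(G-v)\geq(s-1)(n+m_1-1)+\sum_{j=1}^{t}(n+m_j-1)$, as required.

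For the extremal characterization, equality forces $\deg(v)=n+m_1-1$ and, by the induction hypothesis applied in its extremal form (the $K_3$ alternative from the base case being ruled out by $n$ odd), $G-v=(s-2)K_{1,n+m_1-1}\sqcup\sqcup_{j=1}^{t}K_{1,n+m_j-1}$. To conclude $G=H$ it suffices to show that $v$ has no non-leaf neighbor in $V(G-v)$; suppose for contradiction that $u\in V(G-v)$ is such a neighbor, lying in a component $C$ of $G-v$ with center $c$. The plan is to exhibit an $(sK_{1,n},F_2)$-free coloring. When $C=K_{1,n+m_1-1}$ is big (and $u$ a leaf of $C$), I would color each big component with $n$ red and $m_1-1$ blue edges, choosing $cu$ red inside $C$; each small $K_{1,n+m_j-1}$ with $n-1$ red and $m_j$ blue; and $v$'s edges with $vu$ red, together with $n-1$ red and $m_1-1$ blue edges to $L_v$. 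Then the red $K_{1,n}$'s at $v$ and at $c$ must both contain $u$, capping the number of disjoint red $K_{1,n}$'s at $s-1$, while no blue $K_{1,m_1}$ appears anywhere.

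The main obstacle is when $C=K_{1,n+m_{j_0}-1}$ is small ($m_{j_0}<m_1$), since $c$'s red degree can never reach $n$ and the preceding overlap trick fails. My fix is to pass the forced overlap through blue instead: pick any $s-1$ of the $K_{1,n+m_1-1}$-components of $G-v$ and color them entirely red (removing every outside source of blue $K_{1,m_1}$); color the remaining $k-1$ big components (where $k$ is the number of $j$'s with $m_j=m_1$) with $n-1$ red and $m_1$ blue; color $C$ with $n-1$ red and $m_{j_0}$ blue edges, ensuring $cu$ is blue so that $c$'s blue $K_{1,m_{j_0}}$ uses $u$; color each other small component with $n-1$ red and $m_j$ blue; and color $v$'s edges with $vu$ blue together with $n-1$ red and $m_1-1$ blue edges to $L_v$, so that $v$'s blue $K_{1,m_1}$ is forced to use $u$. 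A short matching argument then shows the $k$ available blue $K_{1,m_1}$-sources (namely $v$ and the $k-1$ partially-coloured big components) exactly fill the $k$ size-$m_1$ slots of $F_2$, leaving $c$ as the only supplier of $K_{1,m_{j_0}}$; but $c$ conflicts with $v$ at $u$, so blue $F_2$ is unreachable, while red $K_{1,n}$'s are capped at $s-1$. The remaining subcase $u=c$ (which likewise forces $m_{j_0}<m_1$) is handled analogously, with $v$'s and $u$'s blue stars both forced to share the edge $vu$.
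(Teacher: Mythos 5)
Your lower-bound argument (induction on $s$, Lemma~\ref{decompose} to force $\Delta(G)\geq n+m_1-1$, the extension trick to get $G-v\rightarrow((s-1)K_{1,n},F_2)$) is exactly the paper's proof and is fine, as is the reduction of the extremal case to showing $N(v)\cap V(G-v)=\emptyset$. The genuine gap is in your case (a), where $u$ is a leaf of a component $C\cong K_{1,n+m_1-1}$. You colour \emph{every} component isomorphic to $K_{1,n+m_1-1}$ with $n$ red edges at its centre. But $G-v$ contains $s-2+k$ such components, where $k$ is the number of indices $j$ with $m_j=m_1$; if $k\geq 2$ these components already supply $s-2+k\geq s$ pairwise vertex-disjoint red copies of $K_{1,n}$, so your colouring contains a red $sK_{1,n}$ and yields no contradiction at all. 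Concretely, take $s=2$, $n=3$, $t=2$, $m_1=m_2=3$, $G-v=K_{1,5}\sqcup K_{1,5}$ and $v$ joined to a leaf $u$ of one star and to four private leaves: your recipe puts three red edges at each of the two centres, i.e.\ a red $2K_{1,3}$. The overlap-at-$u$ trick only caps the sources at $s-1$ when $k=1$. The paper avoids this by giving $n$ red edges only to $s-1$ of the big components (an ``$A$''-part containing $C$) and $n-1$ red edges to every component indexed $j=2,\ldots,t$; then the price is that the $k-1$ remaining big components get blue degree $m_1$, and one must argue there are at most $\ell-1<\ell$ disjoint blue $K_{1,m_1}$'s --- the step your case (a) skips entirely. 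Your own case (b) shows you know this device, so the fix is routine, but as written case (a) is wrong for $k\geq2$.

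Two smaller points. First, throughout you colour $v$'s remaining edges ``to $L_v$'', tacitly assuming $u$ is the \emph{only} neighbour of $v$ inside $V(G-v)$; this is not justified, and extra edges from $v$ into $G-v$ can change the star counts (e.g.\ in case (b) a red edge from $v$ to the centre of a small component creates an $s$-th red $K_{1,n}$ disjoint from your $s-1$ all-red components). The paper is admittedly also brief on this point, but your write-up bakes the assumption into the colouring itself. Second, in case (b) your Hall-type argument on blue star sources is a genuinely different (and workable, once made precise --- ``$c$ is the only supplier of $K_{1,m_{j_0}}$'' should be replaced by the count that requirements of size at least $m_{j_0}$ outnumber available suppliers by one) route from the paper, which instead colours the $s-1$ chosen big components entirely red so that the blue subgraph has exactly $\sum_j m_j$ edges and contains a blue $P_4$, hence cannot contain $F_2$; that counting trick is shorter and handles your subcases $u$ a leaf and $u=c$ uniformly.
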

\begin{proof}
	We use induction on $s$ to prove the theorem. The base case $s=1$, follows from Theorem \ref{star-forest}. Let $s\geq 2$ and $G$ be a graph such that $G\rightarrow(sK_{1,n}, \sqcup_{j=1}^{t}K_{1, m_j})$.

	\medskip
	First, by Lemma~\ref{decompose}, we have $\Delta(G)\geq n+m_1-1$. 
	
	\medskip
	Now, let $v$ be a vertex of maximum degree in $G$, i.e. $\deg(v)\geq n+m_1-1$. By a similar argument, used in the proof of Theorem \ref{starsss}, $(G-v) \rightarrow((s-1)K_{1,n}, \sqcup_{j=1}^{t}K_{1, m_j})$. By the induction hypothesis, $e(G-v)\geq (s-2)(n+m_1-1)+\sum_{j=1}^{t} (n+m_j-1)$ and so $$e(G)=\deg(v)+ e(G-v)\geq (s-1)(n+m_1-1)+\sum_{j=1}^{t} (n+m_j-1).$$
This observation shows that $$\hat{r}(sK_{1,n}, \sqcup_{j=1}^{t}K_{1, m_j})=(s-1)(n+m_1-1)+\sum_{j=1}^{t} (n+m_j-1).$$

Now, let $G\rightarrow(sK_{1,n}, \sqcup_{j=1}^{t}K_{1, m_j})$ and $e(G)=(s-1)(n+m_1-1)+\sum_{j=1}^{t} (n+m_j-1)$. Also, suppose that $m_1=\cdots=m_\ell$ and $m_{\ell+1}<m_\ell$, for some $\ell\geq 1$. By the above argument, we have $\deg(v)=n+m_1-1$ and  $e(G-v)=(s-2)(n+m_1-1)+\sum_{j=1}^{t} (n+m_j-1)$. By the induction hypothesis, $G-v=(s-2)K_{1,n+m_1-1}\sqcup_{j=1}^{t}K_{1,n+m_j-1}$. We claim that $N(v)\cap V(G-v)=\emptyset$.

On the contrary, let $u\in N(v)\cap V(G-v)$.  Set $A=(s-1)K_{1,n+m_1-1}$ and $B=\sqcup_{j=2}^{t}K_{1,n+m_j-1}$. First, suppose that $u\in N(v)\cap S$, for some component $S$ of $A$. Note that the maximum degree condition on $G$ implies that $u$ is not the root vertex of $S$.  In this case, color $n$ edges of each component in $A$ and also $n-1$ edges of each component in $B$ by red and the rest by blue. Also, color $n$ edges incident with $v$ by red and the rest by blue such that edges incident with $u$ are red.  It is obvious that the red subgraph contains at most $s-1$ disjoint copies of $K_{1,n}$ and the blue subgraph contains at most $\ell-1$ disjoint copies of $K_{1,m_1}$. Hence, we have a $(sK_{1,n}, \sqcup_{j=1}^{t}K_{1, m_j})$-free coloring of $G$, a contradiction.
 
 \medskip
Now, let $u\in N(v)\cap S$ for some component $S$ of $B$.  In this case, color all edges in $A$ and also $n-1$ edges of each component in $B$ by red and the rest by blue. Also, color $n-1$ edges incident with $v$ by red and the rest by blue such that $S\cup N(v)$ contains a blue copy of $P_4$.  Clearly, there are at most $(s-1)$ disjoint red copies of $K_{1,n}$. Also, the number of blue edges is exactly $ \sum_{j=1}^{t} m_j $.  Existence of a blue $ P_4 $ in the blue subgraph implies that $G$ contains no blue copy of $\sqcup_{j=1}^{t}K_{1, m_j}$ and so we have a $(sK_{1,n}, \sqcup_{j=1}^{t}K_{1, m_j})$-free coloring of $G$, a contradiction. This contradiction shows that $N(v)\cap V(G-v)=\emptyset$ and so $G=(s-1)K_{1,n+m_1-1}\sqcup_{j=1}^{t}K_{1,n+m_j-1}$. This completes the proof.
\end{proof}

\section{Concluding Remarks}
We close the paper with some supplementary remarks. First, one may think of a generalization of the results of the paper to the multicolor size Ramsey numbers. Let $p_1, p_2,\ldots, p_t$ and $n_{i_1}\geq n_{i_2}\geq \cdots\geq n_{i_{p_i}}$, $(1\leq i\leq q)$, be positive integers. Also, let $F_1, F_2, \ldots, F_q$ be star forests such that for $i=1, 2, \ldots, q$,  $F_i=\sqcup_{j=1}^{p_i}K_{1,n_{i_j}}$. Set $p=\sum_{i=1}^{q}p_i$ and for $k=q, q+1, \ldots, p$, suppose that
$$l_k=\max\{(n_{1_{j_1}}-1)+(n_{2_{j_2}}-1)+ \cdots+(n_{t_{j_t}}-1)+1: ~ j_1+ j_2+ \cdots+ j_t=k\}.$$

\medskip
Using the simple fact that $K_{1,(m_1-1)+(m_2-1)+\cdots+(m_q-1)+1}\rightarrow(K_{1,m_1},K_{1,m_2},\ldots,K_{1,m_q})$, we deduce that if $F=\sqcup_{k=q}^{p}K_{1,l_k}$, then $F\rightarrow(F_1,F_2,\ldots,F_q)$. Thus,
\begin{equation}\label{eq0}
	\hat{r}(F_1,F_2,\ldots,F_q)\leq\sum_{k=q}^{p}l_k.
\end{equation}

We believe that equality holds in \eqref{eq0} which is stated as the following conjecture and can be considered as an extension of Conjecture \ref{conj} to the multicolor case.

\begin{conjecture}\label{ourconj}
	Let $p_1,p_2,\ldots,p_q$ and $n_{i_1}\geq n_{i_2}\geq \cdots\geq n_{i_{p_i}}$, $(1\leq i\leq q)$, be positive integers  and $p=\sum_{i=1}^{q}p_i$. If $F_1,F_2,\ldots,F_q$ are star forests such that $F_i=\sqcup_{j=1}^{p_i}K_{1,n_{i_j}}$, then
	$$\hat{r}(F_1,F_2,\ldots,F_q)= \sum_{k=q}^{p}l_k,$$
	where, $l_k=\max\{(n_{1_{j_1}}-1)+(n_{2_{j_2}}-1)+ \cdots+(n_{q_{j_q}}-1)+1: ~ j_1+ j_2+ \cdots+ j_q=k\}$.
\end{conjecture}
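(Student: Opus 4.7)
Inequality \eqref{eq0} supplies the upper bound, so the task reduces to proving that every graph $G$ with $G\to(F_1,\ldots,F_q)$ satisfies $e(G)\ge\sum_{k=q}^{p}l_k$. My plan is to lift the two-color argument behind Theorems~\ref{odd-starforest} and~\ref{odd-stars-forest} to the multicolor setting: first I would establish a $q$-color analogue of Lemma~\ref{decompose}, and then use it inductively to strip off $p-q+1$ vertices whose degrees account for $\sum_{k=q}^{p}l_k$ edges.

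The first step is the multicolor free-coloring lemma: if $\Delta(G)\le l_q-2=\sum_{i=1}^{q}(n_{i_1}-1)-1$, then $G$ admits a $q$-coloring in which color class $i$ has maximum degree at most $n_{i_1}-1$, hence no monochromatic $K_{1,n_{i_1}}\subseteq F_i$. This should follow from Vizing's theorem by taking a proper edge coloring with at most $l_q-1$ colors and grouping the classes into $q$ palettes of sizes $n_{1_1}-1,\ldots,n_{q_1}-1$. When every $n_{i_1}$ is odd, a Petersen-style refinement (embed $G$ into an $(l_q-1)$-regular host $H$ and apply Theorem~\ref{peterthm}) should improve the threshold to $\Delta(G)\le l_q-1$, exactly mirroring the second half of Lemma~\ref{decompose}.

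The second step is the degree-stripping induction. The contrapositive of Step~1 would force $\Delta(G)\ge l_q$ (under the odd-parity assumption) or $\ge l_q-1$ in general. I would pick a vertex $v_1$ of maximum degree and observe that, for each $i$, the Claim inside the proof of Theorem~\ref{starsss} generalizes directly: coloring every edge at $v_1$ by color $i$ yields $G-v_1\to(F_1,\ldots,F_i\setminus K_{1,n_{i_1}},\ldots,F_q)$. Applying Step~1 to $G-v_1$ with the index $i$ attaining $l_{q+1}=\max_i\bigl(n_{i_2}-1+\sum_{i'\ne i}(n_{i'_1}-1)+1\bigr)$ would then give $\Delta(G-v_1)\ge l_{q+1}$. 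Iterating should produce vertices $v_1,\ldots,v_{p-q+1}$ with $\deg_{G_{k-1}}(v_k)\ge l_{q+k-1}$, where $G_k=G-\{v_1,\ldots,v_k\}$, and summation yields $e(G)\ge\sum_{k=q}^{p}l_k$. Along the way one must track which forest loses its current-largest star at each step so that the extremal composition $(j_1,\ldots,j_q)$ defining $l_{q+k-1}$ is realized by a coherent chain of reductions rather than being re-optimized independently at each stage.

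The hard part will be removing the parity hypothesis. Exactly as in the two-color case, Step~1 is sharp only when all currently-largest stars are odd; whenever some $n_{i_1}$ is even, a free coloring exists already at $\Delta=l_q-1$, and the iteration loses one edge per step, for an aggregate deficit of up to $p-q+1$. Closing this gap should require a multicolor analogue of the bipartite-recoloring argument in the second case of Theorem~\ref{starsss}: study the bipartite subgraph $B$ between the prospective chain $W=\{v_1,\ldots,v_{p-q+1}\}$ and $V(G)\setminus W$, use the fact that bipartite graphs are class~I to properly edge-color $B$ with $\Delta(B)$ colors, and then show that the residual edges inside $V(G)\setminus W$ are too few to simultaneously destroy a free coloring in every color class. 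This structural step, which appears to require a genuinely new idea in order to control the cascading parity defect across $q$ colors, is the principal obstacle, and essentially inherits the open character of Conjecture~\ref{conj} in the two-color case.
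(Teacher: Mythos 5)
The statement you are proving is Conjecture~\ref{ourconj}, which the paper does not prove: it is explicitly posed as an open problem extending Conjecture~\ref{conj} (itself still open in general) to $q$ colors. Your proposal is therefore correctly measured against nothing more than the paper's partial results, and on that score your outline is sound as far as it goes: the multicolor analogue of Lemma~\ref{decompose} (Vizing to partition $l_q-1$ colour classes into palettes of sizes $n_{i_1}-1$, plus the Petersen refinement when every $n_{i_1}$ is odd so that $\sum_i(n_{i_1}-1)$ is even), combined with the degree-stripping iteration, is exactly the mechanism behind Theorem~\ref{odd-starforest}, and it does yield the conjecture when all star sizes are odd --- which is precisely one of the multicolor extensions the authors list in the concluding remarks. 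One small simplification: your worry about tracking a ``coherent chain'' of reductions is unnecessary, since after deleting $v_1,\ldots,v_{k-1}$ the graph $G_{k-1}$ is simultaneously Ramsey for \emph{every} system obtained by deleting $k-1$ largest components distributed among the $F_i$, so $\Delta(G_{k-1})\ge l_{k+q-1}$ follows by taking the maximum over all compositions at each step independently.

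The genuine gap is the one you name yourself in the final paragraph: without the parity hypothesis the free-coloring threshold drops to $\Delta(G)\le l_q-2$, the iteration loses up to one edge per step, and no multicolor version of the bipartite-recoloring repair from Theorem~\ref{starsss} is supplied. That repair is the entire content of the open problem --- it is not known even for $q=2$ with stars of distinct sizes --- so your proposal does not constitute a proof of the conjecture. What you have is a correct proof of the all-odd special case plus an honest description of the obstruction; to be usable, the write-up should be reframed as such rather than as a proof of the full statement.
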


Using the same methods as in the proofs, we can generalize the results of Theorems \ref{starsss}, \ref{star-forest}, \ref{odd-starforest}, \ref{odd-stars-forest}. In fact, we have the exact values of the following parameters.

\begin{itemize}
	\item[$\bullet$] $\hat{r}(s_1K_2, s_2K_2, \ldots, s_qK_2, \sqcup_{j=1}^{t}K_{1, m_j}),$
	
	\item[$\bullet$] $\hat{r}(K_{1,n_1}, K_{1,n_2},\ldots,K_{1,n_q}, \sqcup_{j=1}^{t}K_{1, m_j}),$
	
	\item[$\bullet$] $\hat{r}(F_1,F_2,\ldots,F_q)$, when each component of $F_i$'s is odd,
	
	\item[$\bullet$] $\hat{r}(s_1K_{1,n_1}, s_2K_{1,n_2},\ldots, s_qK_{1,n_q}, \sqcup_{j=1}^{t}K_{1, m_j})$, when for  each $i$, $n_i$ and $m_1$ are odd.
\end{itemize}

\small

\medskip

\end{document}